\documentclass[12pt]{article}%
\usepackage{enumerate}
\usepackage{graphicx,psfrag}
\usepackage{graphicx}
\usepackage{amsmath}
\usepackage{amsfonts}
\usepackage{amssymb}%
\setcounter{MaxMatrixCols}{30}

\providecommand{\U}[1]{\protect\rule{.1in}{.1in}}

\newtheorem{theorem}{Theorem}

\newtheorem{lemma}[theorem]{Lemma}

\newtheorem{proposition}[theorem]{Proposition}

\newenvironment{proof}[1][Proof]{\textbf{#1.} }{\ \rule{0.5em}{0.5em}}
\setlength{\textwidth}{6.7in}
\setlength{\oddsidemargin}{0.0in}
\setlength{\textheight}{8.0in} \setlength{\topmargin}{0.0in}
\begin{document}

\title{Maximizing the number of edges in optimal $k$-rankings}
\author{Rigoberto Fl\'{o}rez \thanks{
A part of this work was performed at The Citadel and supported by The Citadel Foundation, and a
second part of the work was performed at University of South Carolina Sumter,
Corresponding address:\ Department of
Mathematics and Computer Science, The Citadel, Charleston, SC 29409,
Email:\ rigo.florez@citadel.edu}\\The Citadel\\Charleston, SC 29409
\and Darren A. Narayan\thanks{Corresponding address:\ School of Mathematical
Sciences, Rochester Institute of Technology, Rochester, NY 14623-5604,
Email:\ darren.narayan@rit.edu}\\Rochester Institute of Technology\\Rochester, NY 14623-5604}
\maketitle

\begin{abstract}
A $k$-ranking is a vertex $k$-coloring such that if two vertices have the same
color any path connecting them contains a vertex of larger color. The
\textit{rank number} of a graph is smallest $k$ such that $G$ has a
$k$-ranking. For certain graphs $G$ we consider the maximum number of edges
that may be added to $G$ without changing the rank number. Here we investigate
the problem for $G=P_{2^{k-1}}$, $C_{2^{k}}$, $K_{m_{1},m_{2},\dots,m_{t}}$,
and the union of two copies of $K_{n}$ joined by a single edge. In addition to
determining the maximum number of edges that may be added to $G$ without
changing the rank number we provide an explicit characterization of which
edges change the rank number when added to $G$, and which edges do not.

\end{abstract}

\section{Introduction}

A \emph{vertex coloring} of a graph is a labeling of the vertices so that no
two adjacent vertices receive the same label. A $k$-\emph{ranking} of a graph
is a coloring of the vertex set with $k$ positive integers such that on every
path connecting two vertices of the same color there is a vertex of larger
color. The \textit{rank number of a graph} is defined
to be the smallest $k$ such that $G$ has a $k$-ranking.

Early studies involving the rank number of a graph were sparked by its
numerous applications including designs for very large scale integration
layout (VLSI), Cholesky factorizations of matrices, and the scheduling of
manufacturing systems \cite{torre,leiserson,sen}. Bodlaender et al. proved
that given a bipartite graph $G$ and a positive integer $n$, deciding whether
a rank number of $G$ is less than $n$ is NP-complete \cite{bodlaender}. The
rank number of paths, cycles, split graphs, complete multipartite graphs,
powers of paths and cycles, and some grid graphs are well known
\cite{alpert,bodlaender,bruoth,chang,ghoshal,novotny,ortiz}.

In this paper we investigate an extremal property of rankings that has not yet
been explored. We consider the maximum number of edges that may be added to
$G$ without changing the rank number. Since the maximum number of edges that
can be added to a graph without changing the rank number varies with each
particular ranking, we will focus on families where an optimal ranking has a
specific structure. Here we investigate the problem for $G=P_{2^{k-1}}$,
$C_{2^{k}}$, $K_{m_{1},m_{2},\dots,m_{t}}$, and the union of two copies of
$K_{n}$ joined by a single edge.

In addition to determining the maximum number of edges that may be added to $G$
without changing the rank number we provide an explicit characterization of which
edges change the rank number when added to $G$, and which edges do not. That is,
given a vertex $v_n$  in $n$th position in the graph, we provide an algorithm to
add a new edge with $v_n$ as one of its vertices to  the graph $G$ without
changing its ranking. For this construction  we use the binary representation of $n$
to determine the position of the second vertex of the new edge. We also construct the
maximum number of edges, so called \emph{good edges}, that can be added to the graph
without changing its ranking. We enumerate the  maximum number of good edges.

\section{Preliminaries}

In this section we review elementary properties and known results about rankings.

A labeling $f:V(G)\rightarrow\{1,2,\dots,k\}$ is a $k$-\textit{ranking} of a
graph $G$ if and only if $f(u)=f(v)$ implies that every $u-v$ path contains a
vertex $w$ such that $f(w)>f(u)$. Following along the lines of the chromatic
number, the \textit{rank number of a graph} $\chi_{r}(G)$ is defined to be the
smallest $k$ such that $G$ has a $k$-ranking. If $H$ is a subgraph of $G$,
then $\chi_{r}(H)\leq\chi_{r}(G)$ (see \cite{ghoshal}).

We use $P_{n}$ to represent the path with $n$ vertices. It is well
known that a ranking of $P_{n}$ with $V\left(  P_{n}\right)  =\left\{
v_{1},v_{2},...,v_{n}\right\}  $ and $\chi_{r}(P_{n})$ labels can be
constructed by labeling $v_{i}$ with $\alpha+1$ where $2^{\alpha}$ is the
largest power of $2$ that divides $i$ \ \cite{bodlaender}. We will call this
ranking the \textit{standard ranking of a path}.

We use $C_{2^{k}}$ to denote a cycle with $2^{k}$ vertices. A multipartite
graph with $t$ components is denoted by $K_{m_{1},m_{2},\dots,m_{t}}$ where
the $i${th} component has $m_{i}$ vertices. The complete graph with $n$
vertices is denoted by $K_{n}$.

Let $\Gamma$ and $H$ be graphs with $V(H)\subseteq V(\Gamma)$ and $E(H)\cap
E(\Gamma)=\emptyset$. We say that an edge $e\in H$ is \emph{good} for $\Gamma$
if $\chi_{r}\left(  \Gamma\cup\{e\}\right)  =\chi_{r}(\Gamma)$, and $e$ is
\emph{forbidden} for $\Gamma$ if $\chi_{r}(\Gamma\cup\{e\})>\chi_{r}(\Gamma)$. We use
$\mu (G)$ to represent the cardinality of the maximum set of good edges for $G$.

For example, in Figure \ref{figure1} we show a ranking of a graph $P_{2^{4}-1}\cup H_{P}$
where $H_{P}$ is the set of all good edges for $P_{2^{4}-1}$. The set of
vertices of $P_{2^{4}-1}$ is $\{v_{1},\dots,v_{15}\}$. We can see that $\chi_{r}(P_{2^{4}-1}\cup H_{P})=\chi_{r}(P_{2^{4}-1})=4$
and that $E(H_{P})$ is comprised of 20 good edges. That is, $\mu (P_{2^{4}-1})=20$.
Theorems \ref{maintheorem} and \ref{maintheoremcomponents} give  necessary and sufficient conditions to determine
whether graphs $G=P_{2^{k}-1}\cup H_{P}$ and $P_{2^{k}-1}$ have the same rank number.

\begin{figure} [htbp]
\begin{center}
\psfrag{v1}[c]{$v_1$} \psfrag{v15}[c]{$v_{15}$} \psfrag{1}[c]{$1$}
\psfrag{2}[c]{$2$} \psfrag{3}[c]{$3$} \psfrag{4}[c]{$4$}
\includegraphics[width=110mm]{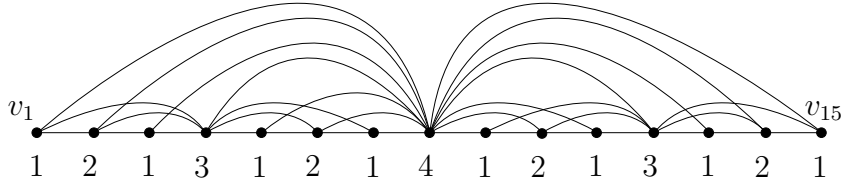}
\end{center}
\caption{$P_{2^4-1} \cup H_P$} \label{figure1}
\end{figure}

Figure \ref{figure2} Part (a) shows the graph $G=C_{2^{4} }\cup H$
where $H$ is the set of all good edges for $C_{2^{4}}$.  We can see that
 $\chi_{r}(C_{2^{4}}\cup H)=\chi_{r}(C_{2^{4}})=5$ and that $E(H)$ is comprised
 of 33 good edges. That is, $\mu (C_{2^{4}})=33$. Theorem \ref{goodarcscyclecorollay} gives
necessary and sufficient conditions to determine whether the graphs
$G=C_{2^{k}}\cup H$ and $C_{2^{k}}$ have the same rank number.

\begin{figure} [htbp]
\begin{center}
\psfrag{v1}[c]{$v_1$} \psfrag{v15}[c]{$v_{15}$}
\psfrag{v16}[c]{$v_{16}$} \psfrag{1}[c]{$1$} \psfrag{2}[c]{$2$}
\psfrag{3}[c]{$3$} \psfrag{4}[c]{$4$} \psfrag{5}[c]{$5$}
\includegraphics[width=40mm]{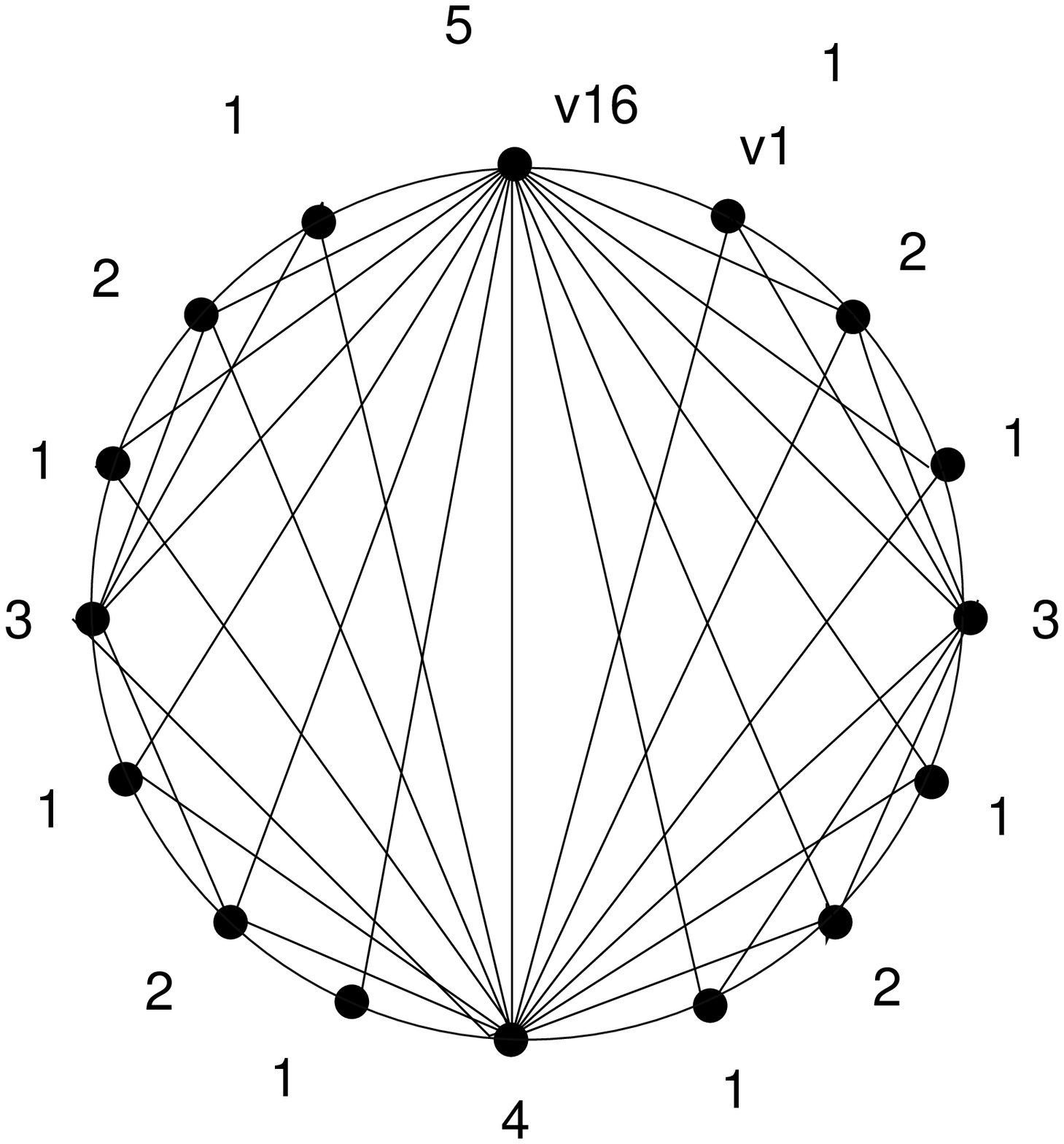} \hspace{2cm}
\includegraphics[width=40mm]{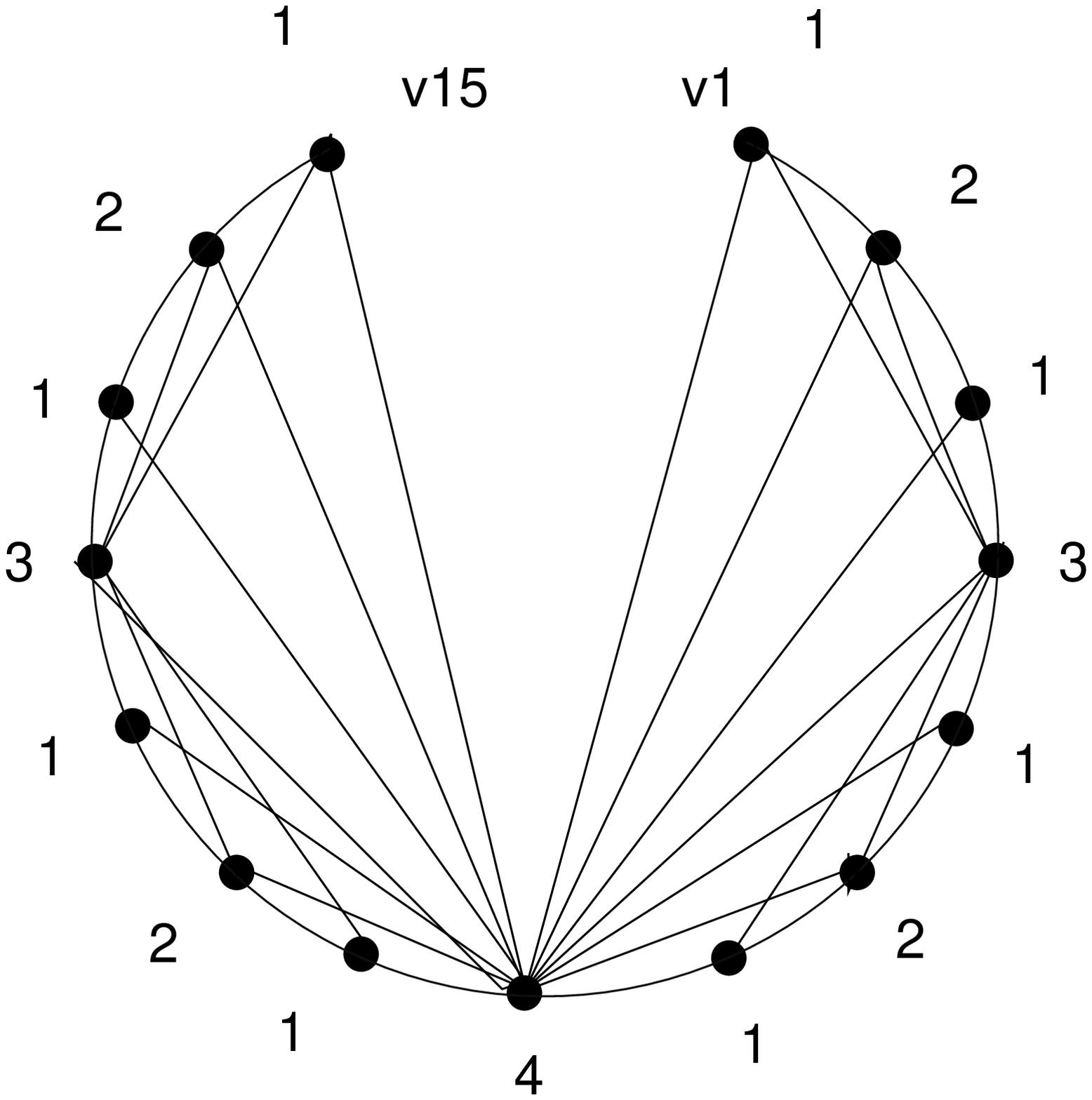}
\end{center}
\caption{(a)\ \ $G:=C_{2^k}\cup H$ \hspace{2cm} (b)\ \
$G':=\left(C_{2^4} \setminus \{ v_{16}\}\right)\cup H' $}
\label{figure2}
\end{figure}

\begin{lemma}
[\cite{bodlaender, bruoth}]\label{rankingofapath} If $k\geq1$, then

\begin{enumerate}
\item  $P_{2^{k}-1}$ has a unique $k-$ranking and $\chi_{r}(P_{2^{k}-1}) =k$.

\item $C_{2^{k}}$ has a unique $k-$ranking and $\chi_{r}(C_{2^{k}}) =k+1$.
\end{enumerate}
\end{lemma}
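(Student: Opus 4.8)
The plan is to handle both parts through a single recursive idea: in any ranking of a connected graph the largest color class is a singleton, since two vertices of the globally maximal color would have all connecting paths lacking a larger color. On a path this bounds the length by a recursion, and on a cycle it lets me delete one vertex and fall back to the path case. First, for the path I would let $R(m)$ denote the largest $n$ for which $P_{n}$ admits an $m$-ranking. Given any $m$-ranking of $P_{n}$, the unique vertex $v_{j}$ carrying the maximum color splits the path into two subpaths on $j-1$ and $n-j$ vertices, each inheriting a ranking that uses only the colors below the maximum, i.e.\ an $(m-1)$-ranking. Hence $j-1\le R(m-1)$ and $n-j\le R(m-1)$, so $n\le 2R(m-1)+1$; the standard ranking of a path shows this is attained, whence $R(m)=2R(m-1)+1$ with $R(0)=0$ and therefore $R(m)=2^{m}-1$. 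Consequently $P_{2^{k}-1}$ is $k$-rankable but not $(k-1)$-rankable, giving $\chi_{r}(P_{2^{k}-1})=k$, and likewise $\chi_{r}(P_{2^{k}})=k+1$ since $2^{k}>R(k)$.

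For uniqueness of the $k$-ranking of $P_{2^{k}-1}$ I would induct on $k$, with the base $k=1$ immediate. Because $2^{k}-1=R(k)$ is extremal, the inequalities above are forced to be equalities: the maximum color must equal $k$ (a smaller maximum would bound $n$ strictly below $2^{k}-1$) and the splitting vertex must be exactly $v_{2^{k-1}}$, so each half is a copy of $P_{2^{k-1}-1}$ receiving a $(k-1)$-ranking. The induction hypothesis forces each half to carry the standard ranking, and reassembling yields precisely the standard ranking of $P_{2^{k}-1}$, proving uniqueness.

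Turning to $C_{2^{k}}$, deleting one edge leaves $P_{2^{k}}$, so $\chi_{r}(C_{2^{k}})\ge\chi_{r}(P_{2^{k}})=k+1$. For the reverse bound I would exhibit a $(k+1)$-ranking: place the color $k+1$ on a single vertex $w$ and give $C_{2^{k}}-w=P_{2^{k}-1}$ the standard $k$-ranking. The decisive check — and the point at which the cycle differs from the path — is that the ranking condition on a cycle demands a larger color on \emph{both} arcs joining two equally colored vertices; here the arc avoiding $w$ contains a larger color by the path property, while the arc through $w$ contains $k+1$, so the condition holds and $\chi_{r}(C_{2^{k}})=k+1$. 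Finally, in any optimal ranking the color $k+1$ occupies a unique vertex $w$ (again by the singleton-maximum argument, now because both arcs would otherwise lack a larger color), and the restriction to $C_{2^{k}}-w=P_{2^{k}-1}$ is a valid $k$-ranking, hence the unique standard one by the previous paragraph. The optimal ranking is thus determined up to the rotation choosing $w$ and a reflection, which is the intended sense of uniqueness.

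The main obstacle I anticipate is precisely the "every path" clause on the cycle: one must verify that both arcs carry a larger color in the constructed ranking and exploit the same clause to pin down the unique top vertex, after which the whole problem collapses cleanly onto the path analysis of the first two paragraphs.
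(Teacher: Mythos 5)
Your proof is correct, but there is nothing in the paper to compare it against: Lemma \ref{rankingofapath} is imported from \cite{bodlaender,bruoth} and never proved internally, so what you have written is a self-contained argument for a result the paper only cites. Your argument is the standard one and it is sound: the observation that the largest color class in any ranking of a connected graph is a singleton yields the recursion $R(m)=2R(m-1)+1$, hence $R(m)=2^{m}-1$; extremality of $n=2^{k}-1$ then forces the maximum label to be $k$, forces it onto the middle vertex $v_{2^{k-1}}$, and forces each half to carry the unique $(k-1)$-ranking of $P_{2^{k-1}-1}$, giving both $\chi_{r}(P_{2^{k}-1})=k$ and uniqueness by induction; the cycle case correctly reduces to the path case by isolating the unique vertex of color $k+1$ and checking both arcs. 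Two remarks. First, the lemma as printed contains a typo ($C_{2^{k}}$ has no $k$-ranking at all, since $\chi_{r}(C_{2^{k}})=k+1$); your reading --- uniqueness of the optimal $(k+1)$-ranking --- is the intended one. Second, your formulation of uniqueness for the cycle (unique up to the rotation choosing $w$) is the precise statement: as labelings of the fixed vertex set there are exactly $2^{k}$ optimal rankings, one per position of the label $k+1$, and this is consistent with the phrase ``unique up to permutation of the two largest labels'' invoked later in the proof of Theorem \ref{goodarcscyclecorollay}, because swapping the labels $k$ and $k+1$ in the standard ranking coincides with rotating it by $2^{k-1}$ positions. A side benefit of your approach is that it also justifies structural facts the paper uses later without proof, e.g.\ that in the standard ranking of $P_{2^{n}-1}$ a label $\geq j$ occurs exactly once in every $2^{j-1}$ consecutive vertices, which is the key step in the proof of Lemma \ref{components}.
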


\section{Enumeration of the Set of Good Edges for $P_{2^{k}-1}$}

In this section we give two ways to find the maximum set of edges that may be added to $G$
without changing the rank number. We give an algorithm to construct a good edge for $G$.
The algorithm is based on the binary representation of $n$, the position of the vertex $v_n$.
That is,  given a vertex $v_n \in G$  in $n$th position,  the algorithm add a new edge,
with $v_n$ as one of its vertices, to  the graph  $G$ without  changing its ranking.
We show that if the graph $G$  is the union of  $P_{2^{t}-1}$ and one edge of the form as indicated
in Procedure 1, then the  ranking of $G$ is equal to the ranking of $P_{2^{t}-1}$.  This guarantees that
the edges constructed using   Procedure 1, are good  edges. We also give sufficient and necessary
conditions to  determine whether a set of edges $H$ is a  set of good set for $P_{2^{t}-1}$.

Since one of our aims is to enumerate the  maximum number of edges that can be added to a graph without
changing its rank, we give a recursive construction of the maximum set of ``good edges". The recursive
construction gives  us a way to count the the number of edges in the set of good edges.

We recall that $(\alpha_{r} \alpha_{r-1} \ldots \alpha_{1}\alpha_{0})_{2}$ with
$\alpha_{h} = 0 \text{ or } 1$ for $0 \le h \le r$ is the binary representation of a positive
integer $b$ if $b=\alpha_{r}2^{r}+\alpha_{r-1}2^{r-1} + \ldots+\alpha_{1}
2^{1}+\alpha_{0}2^{0}$.   We define
\[
g(\alpha_{i})= \left\{
\begin{array}
[c]{ll}
0 & \mbox{if $\alpha_i=1$}\\
1 & \mbox{if $\alpha_i=0$}.
\end{array}
\right.
\]

{\bf Procedure 1.}
Let $V(P_{2^{k}-1})=\{v_{1},v_{2},\ldots,v_{2^{k}-1}\}$ be the set of vertices
of $P_{2^{k}-1}$ and let $H_{P}$ be a graph with $V(H_{P}) = V(P_{2^{k}
-1})$. Suppose that $m<n+1$, $t=\lfloor\log_{2}{m}\rfloor$
and $m=(\alpha_{t} \alpha_{t-1} \ldots  \alpha_{1} \alpha_{0})_{2}$. If $\alpha_j$ is the nonzero
 rightmost entry of $m$, then an edge $e=\{v_{m},v_{n}\}$ is in $H_{P}$ if satisfies any of the following three conditions:

\begin{enumerate}
\item if $m$ is odd then either $n=2^{w}$ for $w>t$ or $n=\Omega(s)$ with
$\Omega(s)=m+1+\sum_{i=1}^{s}g(\alpha_{i})2^{i}$ for $s=1,2,\ldots,t-1$ where
 $m=(\alpha_{t}\alpha_{t-1}\ldots
\alpha_{1} \alpha_{0})_{2}$,

\item $m=2^{j} \cdot(2l+1)$ and $2^{j} \cdot(2l+1)+2 \le n < 2^{j}
\cdot(2l+2)$, for $l>0$,

\item $m=2^{j} \cdot(2l+1)$ and $n=2^{w}$ for $2^{w} \ge2^{j} \cdot(2l+2)$.
\end{enumerate}

{\bf Procedure 2.}
Let $V(C_{2^{k}})=\{v_{1},v_{2},\ldots,v_{2^{k}}\}$ be the set of vertices of
$C_{2^{k}}$. Let $H$ be a graph with $V(H)= V(C_{2^{k}})$. Suppose that $m<n+1$, $t=\lfloor\log_{2}{m}\rfloor$
and $m=(\alpha_{t} \alpha_{t-1} \ldots  \alpha_{1} \alpha_{0})_{2}$. If $\alpha_j$ is the nonzero rightmost entry of  $m$,
then an edge $e=\{v_{m},v_{n}\}$ is in $H$ if satisfies any of the following four conditions:

\begin{enumerate}
\item if $m$ is odd then either $n=2^{w}$ for $w>t$ or $n=\Omega(s)$ with
$\Omega(s)=m+1+\sum_{i=1}^{s}g(\alpha_{i})2^{i}$ for $s=1,2,\ldots,t-1$,

\item $m=2^{j} \cdot(2l+1)$ and $2^{j} \cdot(2l+1)+2 \le n < 2^{j}
\cdot(2l+2)$,

\item $m=2^{j} \cdot(2l+1)$ and $n=2^{w}$ for $2^{w} \ge2^{j} \cdot(2l+2)$
where $l\ge0$,

\item  $1<m < 2^{k}-1$ and $n=2^{k}$.
\end{enumerate}

\begin{lemma}
\label{krankingfunction} Suppose that $f$ is the k-ranking of
$P_{2^{k}-1}$, and $m=(\alpha_{r}\alpha_{r-1}\ldots \alpha_{1}\alpha_{0})_{2}$.
Let $t=\lfloor\log_{2}{m}\rfloor$.

\begin{enumerate}
\item If $\alpha_{i}=0$ for $i<j$ and $\alpha_{j} =1$ , then $f(v_{m})=j$.

\item $f(v_{j})<f(v_{\Omega(i)})$ for $m<j<\Omega(i)$ for $i=1,2, \ldots, t-1$.
\end{enumerate}
\end{lemma}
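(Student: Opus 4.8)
The plan is to treat the two parts separately, using the explicit form of the standard ranking, namely $f(v_i)=\alpha+1$ where $2^{\alpha}$ is the largest power of $2$ dividing $i$. Part 1 is essentially a translation between the binary-digit language of Procedure 1 and the divisibility language of the standard ranking: the hypothesis that $\alpha_i=0$ for all $i<j$ and $\alpha_j=1$ says precisely that $2^{j}$ is the largest power of $2$ dividing $m$, so the position of the rightmost nonzero binary digit of $m$ is exactly its $2$-adic valuation, and $f(v_m)$ is read off directly from the definition. No real work is needed here, and the value so obtained is what will be invoked in Part 2.

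For Part 2 the heart of the matter is a closed form for $\Omega(i)$. Substituting $g(\alpha_\ell)=1-\alpha_\ell$ and using that $m$ is odd (so $\alpha_0=1$), I would expand
\[
\Omega(i)=m+1+\sum_{\ell=1}^{i}(1-\alpha_\ell)2^{\ell},
\]
cancel the terms $\alpha_\ell 2^{\ell}$ for $1\le \ell\le i$ against those inside $m$, and collapse the geometric sum $\sum_{\ell=1}^{i}2^{\ell}=2^{i+1}-2$. This should yield the compact expression
\[
\Omega(i)=2^{i+1}+\sum_{\ell=i+1}^{t}\alpha_\ell 2^{\ell}.
\]
Two features of this formula drive everything. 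First, every summand is a multiple of $2^{i+1}$, so $2^{i+1}\mid\Omega(i)$. Second, writing $M=\sum_{\ell=i+1}^{t}\alpha_\ell 2^{\ell}$ for the common high-order part, we have $m=M+L$ with $L=\sum_{\ell=0}^{i}\alpha_\ell 2^{\ell}<2^{i+1}$ and $\Omega(i)=M+2^{i+1}$.

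With these in hand I would finish by comparing rightmost nonzero binary digits, the quantity controlled by Part 1. For any index $j$ with $m<j<\Omega(i)$, write $j=M+x$ with $0<x<2^{i+1}$. Since $2^{i+1}\mid M$, the largest power of $2$ dividing $j$ is the one dividing $x$, so the rightmost set bit of $j$ sits at a position at most $i$. By contrast $2^{i+1}\mid\Omega(i)$ forces the rightmost set bit of $\Omega(i)$ to a position at least $i+1$. Because the value assigned by the standard ranking (via Part 1) increases with the position of the rightmost set bit, this yields $f(v_j)<f(v_{\Omega(i)})$, as claimed.

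The main obstacle is the derivation and correct interpretation of the closed form for $\Omega(i)$: one must be careful that the possible carry when $\alpha_{i+1}=1$ (so that $2^{i+1}+\alpha_{i+1}2^{i+1}=2^{i+2}$) does not disturb the conclusion. The point to emphasize is that the comparison only uses divisibility of $M$ and $\Omega(i)$ by $2^{i+1}$, not the precise binary pattern, so the carry is harmless. Everything else is routine bookkeeping with $2$-adic valuations.
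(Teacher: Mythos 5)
Your proposal is correct, but there is nothing in the paper to compare it against: the paper states Lemma \ref{krankingfunction} bare, with no proof at all, and immediately uses it in the proof of Proposition \ref{agoodedge}. So your argument supplies details the paper omits rather than a variant of an existing proof. The substance checks out. Identifying ``the $k$-ranking'' with the standard ranking is justified by the uniqueness in Lemma \ref{rankingofapath}; for odd $m$, expanding $g(\alpha_{\ell})=1-\alpha_{\ell}$ and cancelling the low-order bits of $m$ does give the closed form $\Omega(i)=2^{i+1}+\sum_{\ell=i+1}^{t}\alpha_{\ell}2^{\ell}$, hence $2^{i+1}\mid\Omega(i)$; and any $j$ with $m<j<\Omega(i)$ can be written as $M+x$ with $M=\sum_{\ell=i+1}^{t}\alpha_{\ell}2^{\ell}$ and $0<x<2^{i+1}$, so its $2$-adic valuation equals that of $x$ and is at most $i$. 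Monotonicity of the standard ranking in the valuation then yields Part 2, and you are right that the possible carry when $\alpha_{i+1}=1$ is harmless, since only divisibility by $2^{i+1}$ is ever used. One caveat on Part 1: the standard ranking labels $v_{m}$ with \emph{one plus} the largest exponent $\alpha$ such that $2^{\alpha}$ divides $m$, so under the lemma's hypotheses the definition gives $f(v_{m})=j+1$, not $j$. The lemma as printed has an off-by-one error (the paper itself uses $f(v_{m})=j+1$ in Case 3 of the proof of Proposition \ref{agoodedge}), and your claim that the stated value is ``read off directly from the definition'' silently endorses the incorrect value. Since Part 2 needs only that $f$ increases with the position of the rightmost set bit, your argument is unaffected, but you should state the corrected value of $f(v_m)$ explicitly rather than reproduce the one in the statement.
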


\begin{proposition}\label{agoodedge}
\label{goodandforbidden} Let $e \not \in P_{2^{t}-1}$ be an edge with vertices
$v_{m}$ and $v_{n}$ where $m<n$. If $e$ is good for $P_{2^{t}-1}$ then $e \in
H_{P}$.
\end{proposition}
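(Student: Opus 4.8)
The plan is to reduce the statement to a question about one explicitly known labeling and then read off the arithmetic. By Lemma \ref{rankingofapath}(1), $P:=P_{2^{t}-1}$ has a unique $t$-ranking, namely the standard ranking $f$ described in Lemma \ref{krankingfunction}. If $e=\{v_{m},v_{n}\}$ is good, then $\chi_{r}(P\cup e)=\chi_{r}(P)=t$, so $P\cup e$ admits a $t$-ranking $g$; its restriction to $P$ is a $t$-ranking of $P$, hence by uniqueness equals $f$. Thus $f$ itself must be a valid ranking of $P\cup e$. I would therefore replace the hypothesis ``$e$ is good'' by the concrete statement that $f$ remains a ranking after the chord $e$ is inserted, and work only with the values $f(v_{i})$, which Lemma \ref{krankingfunction}(1) identifies with the $2$-adic data of $i$.

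Next I would pin down exactly which equal-colored pairs acquire a new connecting path once $e$ is added. Since $P$ is a path, $P\cup e$ is unicyclic with unique cycle $C_{e}=v_{m}v_{m+1}\cdots v_{n}v_{m}$, every new path runs through $e$, and $v_{m},v_{n}$ are cut vertices separating the tails $v_{1}\cdots v_{m-1}$ and $v_{n+1}\cdots$ from $C_{e}$. The ranking condition on $P\cup e$ then splits into three manageable families: (A) equal-colored pairs lying on $C_{e}$, for which both arcs must contain a larger color; (B) the left endpoint $v_{m}$ paired with a right-tail vertex of color $f(v_{m})$, whose $e$-path $v_{m},v_{n},v_{n+1},\dots$ must meet a larger color; and (C) the symmetric condition for $v_{n}$ and the left tail (the mixed tail--tail pairs pass through both $v_{m}$ and $v_{n}$ and reduce to (B)--(C)). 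In particular $e$ good forces $f(v_{m})\neq f(v_{n})$, since otherwise $e$ is a monochromatic edge.

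I would then run the case analysis dictated by the parity of $m$, matching the three clauses defining $H_{P}$. Write $m=2^{j}(2l+1)$, so $f(v_{m})=j$; the nearest color-$>j$ vertex to the right of $v_{m}$ sits at $2^{j}(2l+2)=2^{j+1}(l+1)$, while the nearest color-$j$ vertex to the right sits at $m+2^{j+1}$. For (B), the $e$-path from $v_{m}$ to the latter contains a color $>j$ exactly when either $v_{n}$ is itself such a vertex, i.e. $f(v_{n})>j$ (which for a power of two means $n=2^{w}$ with $2^{w}\ge 2^{j}(2l+2)$, giving clause (3)), or the peak $v_{2^{j}(2l+2)}$ still lies on that path, i.e. $n\le 2^{j}(2l+2)$ (giving the range in clause (2)). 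When $m$ is odd, $f(v_{m})$ is minimal, so (B) is automatic and the binding constraints are (A) and (C); here I would invoke Lemma \ref{krankingfunction}(2), which says precisely that $v_{\Omega(s)}$ dominates every vertex strictly between $v_{m}$ and it. This is the staircase of successive record colors to the right of $v_{m}$, and I would show that validity on $C_{e}$ together with (C) forces $n$ to be one of these record positions $\Omega(s)$, or a high power of two $2^{w}$ with $w>t$, which is clause (1).

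The main obstacle is the odd-$m$ case, where I must show that every admissible $n$ is an $\Omega(s)$ and that no other $n$ survives. The difficulty is that a failure of validity need not occur on $C_{e}$ itself: as the bookkeeping behind (B)--(C) shows, the binding monochromatic pair may involve the endpoint $v_{n}$ and a vertex in the opposite tail, so I cannot argue purely about the cycle. I would handle this by exploiting the self-similar structure of the labeling $i\mapsto f(v_{i})$ to locate, for any $n$ not of the prescribed form, an explicit equal-colored pair whose $e$-path avoids every larger color; this is where the binary expansion of $m$ and the complementation $g(\alpha_{i})$ in the definition of $\Omega(s)$ do the real work, and assembling these witnesses uniformly across the boundary cases — when a tail is too short for the nearest equal-color vertex to exist — is the part that requires the most care.
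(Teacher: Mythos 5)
Your opening reduction is correct and usefully makes explicit what the paper leaves implicit: by Lemma \ref{rankingofapath}, a good edge must leave the standard ranking $f$ valid on $P_{2^{t}-1}\cup e$, so the question becomes which chords preserve validity of this one labeling. From that point on, however, the argument rests on two claims that are never proved, and one of them is false as stated. The parenthetical claim that the mixed tail--tail pairs ``reduce to (B)--(C)'' fails already in $P_{15}$ (labels $1,2,1,3,1,2,1,4,1,2,1,3,1,2,1$): for $e=\{v_{6},v_{11}\}$ the tail--tail pair $(v_{4},v_{12})$, both of label $3$, is joined by the new path $v_{4}v_{5}v_{6}v_{11}v_{12}$ with labels $3,1,2,1,3$, a violation; yet there is no (B)-violation (the only label-$2$ vertex beyond $v_{11}$ is $v_{14}$, and $v_{6}v_{11}v_{12}v_{13}v_{14}$ passes the label $3$ at $v_{12}$) and no (C)-violation (every path from $v_{1},v_{3},v_{5}$ to $v_{11}$ through $e$ passes $v_{6}$, of label $2$). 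The violation is detected only by your cycle family (A), via the pair $(v_{6},v_{10})$. So what you actually need is that (A), (B) and (C) \emph{jointly} control the tail--tail pairs, and that implication is not a remark: it requires its own induction (a descent through the binary expansions of $m$ and $n$, with the range and boundary checks you allude to), which is comparable in length to the paper's entire Case 2.

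Second, the part you explicitly postpone --- showing in the odd-$m$ case that every surviving $n$ is an $\Omega(s)$ or a power of two, and handling the short-tail boundary cases --- is not a loose end; it is the whole content of the proposition, and it is exactly what the paper's proof carries out in its Cases 1 and 2 (subcases (a) and (b)) using Lemma \ref{krankingfunction}. A plan that flags this as ``where the real work happens'' without doing it is not yet a proof. Worse, the one even-$m$ step you do sketch contains an inference that cannot be repaired: you assert that in family (B) the favorable alternative is ``$f(v_{n})>j$, which for a power of two means $n=2^{w}$.'' But $f(v_{n})>f(v_{m})$ only requires $n$ to be divisible by $2^{j+1}$, not to be a power of two. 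Carried out honestly, your framework certifies, for instance, $e=\{v_{10},v_{12}\}$ in $P_{15}$ as good --- the standard ranking does remain valid there, as a direct check of the single new cycle $v_{10}v_{11}v_{12}$ and the cut vertex $v_{8}$ shows, and this edge indeed appears in $E_{4}$ in the sense of Lemma \ref{components} --- yet $\{v_{10},v_{12}\}$ satisfies none of the three clauses of Procedure 1 ($m=10$ is even, clause 2 gives the empty range $12\le n<12$, and no $n=2^{w}\ge 12$ exists in $P_{15}$), so it is not in $H_{P}$. In other words, completing your argument correctly would produce a good edge outside $H_{P}$ and thus collide with the statement being proved; the collision reflects a genuine discrepancy between Procedure 1 and the paper's own $E_{j}$-characterization in Theorem \ref{maintheoremcomponents}, and your sketch glosses over it at precisely the quoted step.
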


\begin{proof}
We proceed with a proof by contradiction assuming that $e\not \in H_{P}$.
Hence we have one of the following cases:

\begin{enumerate}
\item $m$ is odd and $2^{t+1}<n\not =2^{w}$ where $t=\lfloor\log_{2}{m}
\rfloor$,

\item $m$ is odd, $2^{t}<m+1<n<2^{t+1}$ and $n\not =m+1+\sum_{i=1}^{s}
g(\alpha_{i})2^{i}$ for $s=1,2,\ldots,t-1$ with $t=\lfloor\log_{2}{m}\rfloor$
and $m=(\alpha_{t} \alpha_{t-1} \ldots \alpha_{1} \alpha_{0})_{2}$,

\item $m=2^{j} \cdot(2l+1)$ and $2^{t} < n < 2^{t+1}$ where $2^{t} \ge2^{j}
\cdot(2l+2)$ with $l\ge0$.
\end{enumerate}

If Case 1 holds, then $e$ connects vertices $v_{m}$ and $v_{n}$ with $n>2^{t}$.
Suppose that $2^{w}<n<2^{w+1}$. The standard ranking $f$ of a path implies
that $f(v_{\beta})\in\{1,2,\ldots,w\}$ if $\beta<2^{w}$ and that $f(v_{\delta
})\in\{1,2,\ldots,w\}$ for any $2^{w}<\delta<2^{w+1}$. Therefore, there are
two vertices $v_{\gamma}$ and $v_{\rho}$ such that $f(v_{\gamma})=f(v_{\rho
})=w$ with $\gamma<2^{w}<\rho<2^{w+1}$. The path containing the edge $e$ and
vertices $v_{\gamma}$, $v_{m}$, $v_{n}$, and $v_{\rho}$ has two equal labels
with no larger label in between, which contradicts the ranking property. Hence
$m$ is odd and $n=2^{w}$ for $w>t$, and $e$ is good for $P_{2^{k}-1}$.

If Case 2 holds, then $e$ connects vertices $v_{m}$ and $v_{n}$ with
$n<2^{t-1}$. If $m=2^{t+1}-1$, the argument is similar to the above case, so
we suppose that $m\not =2^{t+1}-1$. If $n$ is odd then $f(v_{m})=f(v_{n})=1$,
which is a contradiction.

For the remaining part of this case, we suppose that $m+2<n<2^{t+1}-1$ is even. This implies that
$m \not= 2^{t+1}-1$ and $m\not=2^{t+1}-3$ (note that the Proposition \ref{agoodedge} is now proved for $n<8$).
Therefore, there is at least one nonzero element in
$A=\{\alpha_{2}, \ldots ,\alpha_{t-1}, \alpha_{t} \}$ and let  $i$ be the smallest subscript such that
$\alpha_{i}\in A$ and  $\alpha_{i}=0$. This give rise to two subcases for the
location of \emph{n}:
\begin{enumerate}
\item[(a)] $n<\omega$ with $\omega=\left(  m+1+g(\alpha_{i})2^{i}\right)  $.

\item[(b)] $\Omega(r)<n<2^{t+1}$ where $r$ is the largest number for which the
inequality holds.
\end{enumerate}

If subcase (a) holds, then $\alpha_{j}=1$ for $j<i$. This implies that first
number equal to one in the binary notation of $m+1$ is in position $i+1$. This
and Lemma \ref{krankingfunction} imply that $f(v_{m+1})=i+1$. Therefore,
$f(v_{\omega})=i+2$,  since $m+1<n<\omega$. The definition of the ranking
function $f$, implies that $f(v_{\beta})\in\{1,2,\ldots,i\}$ if $m+1<\beta
<\omega$ and that $f(v_{\delta})\in\{1,2,\ldots,i\}$ for any $\delta<m+1$.
Therefore, there are two vertices $v_{\gamma}$ and $v_{\rho}$ such that
$f(v_{\gamma})=f(v_{\rho})=i$ with $\gamma<m+1<\rho<\omega$. The path
containing the edge $e$ and vertices $v_{\gamma}$, $v_{m}$, $v_{n}$, and
$v_{\rho}$ has two equal labels with no bigger label in between, which is a contradiction.

Suppose that subcase (b) holds. From Lemma \ref{krankingfunction} we know that
$f(v_{d})<f(v_{\Omega(r)})$ for $m<d<r$, in particular we deduce that
$f(v_{\Omega(i)})<f(v_{\Omega(r)})$ if $i<r$. Let $w=f(v_{\Omega(r)})$. This, $P_{2^{k}-1}$
and the definition of $f$ imply that $f(v_{\beta})\in \{1,2,\ldots,w-1\}$ for any $\beta<\Omega(r)$
and that $f(v_{\delta} )\in\{1,2,\ldots,w-1\}$ for any $\Omega(r)<\delta<\Omega(r+1)$. This implies
that there are two vertices $v_{\gamma}$ and $v_{\rho}$ such that
$f(v_{\gamma})=f(v_{\rho})=w-1$ with $\gamma<\Omega(r)<\rho<\Omega(r+1)$. The
path containing the edge $e$ and vertices $v_{\gamma}$, $v_{m}$, $v_{n}$, and
$v_{\rho}$ has two equal labels with no larger label in between, which is a contradiction.

Finally suppose that Case 3 holds. That is, we suppose that every edge $e$
connecting the vertex $v_{m}$ and $v_{n}$ is good, with $m=2^{j}\cdot(2l+1)$
and $2^{t}<n<2^{t+1}$ where $2^{t}\geq2^{j}\cdot(2l+2)$ with $l\geq0$. This
implies that for any $s\leq2^{t}$, the label $f(v_{s})\in\{1,2,\ldots,t\}$, in
particular $f(v_{m})=j+1<t$. Since $2^{t}<n<2^{t+1}$, the coloring
$f(v_{n})\in\{1,2,\ldots,t\}$. Then there are vertices $v_{s}$ and
$v_{s^{\prime}}$ with $f(v_{s})=f(v_{s^{\prime}})=t$ for $s<2^{t}$ and
$2^{t}<s^{\prime}< 2^{t+1}$. This is a contradiction since the path containing the
edge $e$ and connecting vertices $v_{s}$, $v_{m}$, $v_{n}$ and $v_{s^{\prime}
}$, does not have a label larger than $t$.
\end{proof}

\begin{theorem} \label{maintheorem} Let $G=P_{2^{k}-1}\cup H_{P}$.
The set $E(H_{P})$ is the set of good edges for $P_{2^{k}-1}$.
\end{theorem}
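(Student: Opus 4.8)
The statement is a set equality, so the plan is to prove the two inclusions separately. One inclusion — that every good edge lies in $E(H_P)$ — is exactly Proposition \ref{agoodedge}, so it can be cited directly. What genuinely remains is the \emph{converse} inclusion: every edge of $H_P$ (i.e.\ every edge produced by Procedure 1) is good, that is $E(H_P)\subseteq\{\text{good edges}\}$. The first move is to reduce the word ``good'' to a statement about a single labeling. Write $f$ for the standard ranking of $P_{2^{k}-1}$, where $f(v_i)=\alpha+1$ and $2^{\alpha}$ is the largest power of $2$ dividing $i$. Since $P_{2^{k}-1}$ is a subgraph of $P_{2^{k}-1}\cup\{e\}$ we always have $\chi_{r}(P_{2^{k}-1}\cup\{e\})\ge k$; moreover any $k$-ranking of $P_{2^{k}-1}\cup\{e\}$ restricts to a $k$-ranking of $P_{2^{k}-1}$, which by the uniqueness in Lemma \ref{rankingofapath} must be $f$ itself. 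Hence $e$ is good if and only if $f$ is still a valid ranking after $e$ is added, and the whole question becomes: for which chords $e=\{v_{m},v_{n}\}$ does $f$ survive?

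The key step is to isolate one clean sufficient condition and prove it once, rather than re-running the three cases of Procedure 1 by hand. I would prove: if $\max\{f(v_i):m\le i\le n\}=\max(f(v_{m}),f(v_{n}))$ — equivalently, the largest label on the closed interval $[m,n]$ is attained at one of the two endpoints — then $e$ is good. To see this I use the standard level description of rankings: $f$ is a ranking of a graph exactly when, for each color $c$, every connected component of the subgraph induced on $\{v:f(v)\le c\}$ contains at most one vertex of color $c$. Adding the chord $e$ alters this induced subgraph only at levels $c\ge\max(f(v_{m}),f(v_{n}))=:M^{*}$, since below that threshold one endpoint of $e$ is deleted. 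For $c\ge M^{*}$ the hypothesis forces every vertex $v_{m},\dots,v_{n}$ to have color $\le M^{*}\le c$, so the entire interval $\{v_{m},\dots,v_{n}\}$ survives as one connected subpath; the chord $e$ is then added strictly inside a single component and merges nothing, so no component acquires a second vertex of color $c$. Thus $f$ remains a ranking and $e$ is good.

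It then remains to check that each of the three defining conditions of Procedure 1 forces the interval maximum of $f$ to sit at an endpoint. In Condition 1 with $n=2^{w}$, $w>t$, and in Condition 3, every interior index is $<2^{w}=n$, so every interior label is at most $w<w+1=f(v_{n})$ and the maximum is at $v_{n}$. In Condition 2, $m=2^{j}(2l+1)$ and $n$ lies strictly between $m$ and the next multiple $2^{j}(2l+2)$ of $2^{j+1}$, so no index in $(m,n]$ is divisible by $2^{j}$; every such label is at most $j<j+1=f(v_{m})$ and the maximum is at $v_{m}$. The one delicate subcase is Condition 1 with $n=\Omega(s)$; here I would invoke Lemma \ref{krankingfunction}(2), which gives $f(v_{d})<f(v_{\Omega(s)})$ for every $m<d<\Omega(s)=n$, so the interval maximum is again at $v_{n}$. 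In every case the criterion of the previous paragraph applies and $e$ is good, giving $E(H_P)\subseteq\{\text{good edges}\}$. Combined with Proposition \ref{agoodedge} this yields the claimed equality.

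I expect the main obstacle to be the $n=\Omega(s)$ subcase of Condition 1: translating the arithmetic definition $\Omega(s)=m+1+\sum_{i=1}^{s}g(\alpha_{i})2^{i}$ into the clean ``an endpoint attains the maximum'' statement. Lemma \ref{krankingfunction}(2) is tailored to exactly this, so the risk is mainly in confirming that its hypotheses ($m<d<\Omega(s)$ with $s\le t-1$) cover precisely the range of interior indices that can occur; I would keep careful track of the binary digits $\alpha_{i}$ of $m$ so that both the position $\Omega(s)$ and the label $f(v_{\Omega(s)})$ are pinned down. A secondary point to handle carefully is the level-by-level argument of the sufficiency step, where I must verify that adding a chord strictly inside one component of $\{v:f(v)\le c\}$ never increases that component's count of color-$c$ vertices.
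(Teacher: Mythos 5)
Your proof is correct, and it takes a genuinely different route from the paper's. The paper never argues edge-by-edge: it cites Proposition \ref{agoodedge} for the inclusion of good edges in $E(H_{P})$ (as you do) and then obtains the converse inclusion only implicitly, through the level-set machinery of Lemma \ref{components} and Theorem \ref{maintheoremcomponents} --- delete the set $A_{j}$ of vertices with label at least $j$, observe that $P_{2^{n}-1}\setminus A_{j}$ splits into components of the form $P_{2^{j-1}-1}$, define the edge classes $E_{j}$ hanging off the maximum-labeled vertex of each component, prove by induction on $j$ that $\bigcup_{j=4}^{k}E_{j}$ can be added without raising the rank number, and finally identify $\bigcup_{j}E_{j}$ with $E(H_{P})$. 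You instead reduce ``$e$ is good'' to ``the standard ranking $f$ survives'' via the uniqueness statement in Lemma \ref{rankingofapath}, prove once the clean sufficient criterion that the maximum of $f$ over $[m,n]$ be attained at an endpoint (via the per-color component characterization of rankings, which you apply correctly: below $\max(f(v_m),f(v_n))$ the chord is absent, and at or above it the chord lies inside a single component, so the component structure never changes), and then verify Procedure 1's three conditions by elementary $2$-adic arithmetic, with Lemma \ref{krankingfunction}(2) disposing of the delicate $n=\Omega(s)$ subcase exactly as you anticipated. What each approach buys: yours is shorter and more transparent, pinning goodness of a chord to a visible property of the labels --- indeed your criterion is precisely what the contradiction arguments inside Proposition \ref{agoodedge} exploit in reverse, since every forbidden case there is built from an interior maximum straddled by two equal labels; the paper's decomposition, on the other hand, proves the stronger \emph{simultaneous} statement $\chi_{r}(P_{2^{k}-1}\cup H_{P})=\chi_{r}(P_{2^{k}-1})$, which is what the enumeration $\mu(P_{2^{n}-1})=(n-3)2^{n}+4$ actually requires. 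Note that your level argument upgrades to simultaneity essentially for free: for each color $c$, every chord present in the subgraph induced on $\{v : f(v)\le c\}$ brings its whole interval with it, so all chords land inside existing components at once and $f$ survives the addition of all of $H_{P}$ together; one extra sentence would recover Theorem \ref{maintheoremcomponents} Part 1. A single hygiene point: Procedure 1 can formally output degenerate pairs such as $n=\Omega(s)=m+1$ or $n=2^{w}=m+1$, which are edges of the path itself; these are excluded by the definition of good edges ($E(H)\cap E(\Gamma)=\emptyset$), and you should say so, but this does not affect the substance of your argument.
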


Theorem \ref{maintheorem} can be proved using Proposition \ref{agoodedge}, so we
omit the proof. This Theorem is equivalent to Theorem  \ref{maintheoremcomponents} Part 1.
The proof of Theorem \ref{maintheoremcomponents} counts the maximum number of good edges.
We now give some definitions that are going to be used in Lemma \ref{components} and
Theorem \ref{maintheoremcomponents}. Let $G$ be a graph with $f$ as its $k$-ranking.
We define $A_j= \{v \in V(G)\mid f(v)\ge j\}$ and use $\mathcal{C} (A_j)$ to denote the set of
all component of $G \setminus A_j$. If $\mathcal{C} \in \mathcal{C} (A_j)$ and $v \in V( \mathcal{C})$,  then
\[E(v)=\{vw\mid w\in V(\mathcal{C}) \text{ and $w$ not adjacent to $v$} \}.\]
We denote by $E_j$ the union of all sets of the form $E(v)$ where $f(v)=j-1$, the vertex with maximum label in the component. The union is over all components in $\mathcal{C} (A_j) $. That is,

\[E_j=\bigcup_{\begin{tabular}{c}
                    $v \in \mathcal{C}$, $f(v)=j-1$ \\
                    $\mathcal{C}\in \mathcal{C} (A_j) $ \\
                \end{tabular}
} E(v). \]

\begin{lemma} \label{components} If $3< j\le n$, then
\begin{enumerate}
\item $P_{2^n-1} \setminus A_j$ has $2^{n-j}$ components of the form  $P_{2^{j-1}-1}$.

\item If $\mathcal{C}$ is a component of $P_{2^n-1} \setminus A_j$ and $f(v)=j-1$ for some $v \in \mathcal{C}$,
then  $E(v)$ is a set of good edges for $\mathcal{C}$.

\item $\chi_{r}(P_{2^n-1}\cup E_j)=\chi_{r}(P_{2^n-1}).$
\end{enumerate}

\end{lemma}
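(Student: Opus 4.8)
The plan is to prove the three parts in order, using Part 1 as the structural backbone for Parts 2 and 3. For Part 1, I would first pin down $A_j$ explicitly: under the standard ranking $f$ one has $f(v_i)=\alpha+1$, where $2^{\alpha}$ is the largest power of $2$ dividing $i$, so $f(v_i)\ge j$ is equivalent to $2^{j-1}\mid i$. Hence $A_j=\{v_i : 2^{j-1}\mid i\}$, whose elements occupy the equally spaced positions $2^{j-1},2\cdot 2^{j-1},\dots$, none of them an endpoint of the path since $3<j\le n$. Deleting these vertices cuts $P_{2^n-1}$ into segments, each consisting of the $2^{j-1}-1$ consecutive vertices strictly between two neighbouring multiples of $2^{j-1}$, together with the two end segments; every such segment is a copy of $P_{2^{j-1}-1}$, and counting the segments determined by the spacing gives the asserted number of components.

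For Part 2, the key point is that the restriction of $f$ to a component $\mathcal{C}\cong P_{2^{j-1}-1}$ is, up to reindexing, the standard ranking of $P_{2^{j-1}-1}$, whose unique maximum-label vertex is its centre; this is the vertex $v$ with $f(v)=j-1$. I would then verify that $f$ is still a valid ranking of $\mathcal{C}\cup E(v)$. Since every edge of $E(v)$ is incident to $v$, any path in $\mathcal{C}\cup E(v)$ that uses a new edge passes through $v$, whose label $j-1$ strictly exceeds the label of every other vertex of $\mathcal{C}$. Thus for equally coloured $x,y\neq v$, each $x$–$y$ path either avoids the new edges, so the ranking property is inherited from $\mathcal{C}$, or meets $v$, which supplies the required larger label; and $v$ shares its colour with no other vertex. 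Hence $\chi_r(\mathcal{C}\cup E(v))=j-1=\chi_r(\mathcal{C})$, so $E(v)$ consists of good edges.

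For Part 3 I would show that the same $f$ is a valid ranking of $P_{2^n-1}\cup E_j$, which with $\chi_r(P_{2^n-1})=n$ (Lemma~\ref{rankingofapath}) and subgraph monotonicity forces the equality. Fix equally coloured $x,y$ of common colour $c$ and an arbitrary $x$–$y$ path $P$ in the enlarged graph; recall that every edge of $E_j$ is internal to a single component of $P_{2^n-1}\setminus A_j$ and has its centre endpoint coloured $j-1$. If $c<j-1$, then either $P$ uses an added edge and so meets a centre of colour $j-1>c$, or $P$ lies entirely in $P_{2^n-1}$ and the larger label is provided by the original ranking. If $c=j-1$, then $x,y$ are centres of distinct components, and since the added edges never leave a component while the original edges leaving a component run into $A_j$, the path $P$ must meet a vertex of $A_j$, whose colour is at least $j>c$.

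The hard case, and the main obstacle, is $c\ge j$, where $x,y\in A_j$: here the internal ``shortcut'' edges must be prevented from letting $P$ skip a needed high-colour separator. The resolution is that, because the added edges lie strictly inside the components, they cannot join two distinct components; recording the successive vertices of $A_j$ visited by $P$ therefore yields a \emph{walk} on the subpath induced on $A_j$, since any $A_j$-to-$A_j$ passage avoiding $A_j$ in between stays in one component whose only $A_j$-neighbours are its two consecutive boundary multiples of $2^{j-1}$. Identifying $A_j$ with $P_{2^{n-j+1}-1}$ via $k\mapsto v_{k\cdot 2^{j-1}}$, a short $2$-adic valuation computation gives $f(v_{k\cdot 2^{j-1}})=(j-1)+g(k)$, where $g$ is the standard ranking of $P_{2^{n-j+1}-1}$, so $x,y$ correspond to equally coloured vertices of that path. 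By the ranking property of $P_{2^{n-j+1}-1}$ there is a vertex of strictly larger colour between them, and since a walk on a path graph must traverse every vertex lying between its endpoints, that vertex is visited by the $A_j$-trace of $P$ and hence lies on $P$. This furnishes the larger label and completes the verification.
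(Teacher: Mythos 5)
Your proposal is correct and follows essentially the same route as the paper: Part 1 by locating $A_j$ at the positions divisible by $2^{j-1}$, Part 2 by observing that every edge of $E(v)$ is incident to the unique maximum-label vertex $v$ of its component, and Part 3 by verifying that $f$ remains a valid ranking, with the key structural fact that added edges are internal to components, so vertices of $A_j$ lying between two high-labelled vertices cannot be bypassed (the paper phrases this as a cut-vertex/unavoidability argument, you as a walk on the path induced on $A_j$; these are the same idea, and your $2$-adic identification of that induced path with a shifted standard ranking of $P_{2^{n-j+1}-1}$ is a slightly more explicit way of producing the separating vertex that the paper obtains directly from the ranking property of $P_{2^n-1}$). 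One caveat on Part 1: you defer to ``the asserted number'' of components, but carrying out your own count (the $A_j$ vertices are the $2^{n-j+1}-1$ multiples of $2^{j-1}$, none an endpoint) yields $2^{n-j+1}$ components, not the $2^{n-j}$ claimed in the lemma statement --- indeed the paper's own proof computes $\lceil (2^n-1)/2^{j-1}\rceil = 2^{n-j+1}$, so the discrepancy is a typo in the statement rather than a flaw in your argument, but you should state the correct count rather than endorse the wrong one.
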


\begin{proof} For this proof we denote by $f$ the $k$-ranking of $P_{2^n-1}$. We prove Part 1.  Let $u_1$ and $u_2$
be vertices in $P_{2^n-1}$ with $f(u_1)\ge j$ and $f(u_2)\ge j$ and if $w$ is a vertex between $u_1$ and $u_2$
then $f(w)<j$. Since $P_{2^n-1}$ has unique optimal ranking, every $2^{j-1}$ vertices there is a vertex with label greater
than or equal to $j$ (counting from leftmost vertex). This implies that there is a path, of the form  $P_{2^{j-1}-1}$,
connecting all vertices  between $u_1$ and $u_2$, not including $u_1$ and $u_2$. This proves that all components
of $P_{2^n-1}\setminus A_j$ are of the form $P_{2^{j-1}-1}$, and the total number of those components
is $\lceil (2^{n}-1) / 2^{j-1}\rceil = 2^{n-j+1}$.

Proof of Part 2. Let $v$ be a vertex in $\mathcal{C}$ with  $f(v)=j-1$. Since $j-1$, is the largest label in $\mathcal{C}$),
it easy to see that every path containing edges of $E(v)$ does not contribute to increase the ranking of the $\mathcal{C}$.

Proof of Part 3.  Let $G=P_{2^n-1}\cup E_j$ for some  $3< j \le n$. Let $v_1$ and $v_2$ be vertices in $G$ with
$f(v_1)=f(v_2)$, suppose that both vertices are connected by a path $P$. Suppose $v_1$ and $v_2$ are in the
same component  $\mathcal{C}\in \mathcal{C} (A_j)$, then $f(v_1)=f(v_2)<j$. If $P$ is a path of $\mathcal{C}$,
then by the heredity property from $P_{2^n-1}$, there is a vertex in $P$ with a label larger than $f(v_1)$.  We now
suppose that $P$ is not a path of $\mathcal{C}$. Let $v$ be the vertex in  $\mathcal{C}$ with $f(v)=j-1$. These
two last facts and the definition of $E(v)$ imply that $P$ contains an edge in $E(v)$. Thus, there is a vertex in
$P$ with a label larger than $f(v_1)$.

We suppose $v_1$ and $v_2$ are in different components  of $P_{2^n-1}\setminus A_j$. So, $f(v_1)=f(v_2)<j$.
By the definition of $G$ and $E_j$ we see that any path in $G$ connecting two vertices in different component of
$P_{2^n-1}\setminus A_j$ must have at least one vertex in $A_j$. Since vertices in $A_j$ have labels larger $j-1$,
there is a vertex in $P$ with a label larger than $f(v_1)$.

We now suppose $v_1$ and $v_2$ are in $A_j$. So,  $f(v_1)=f(v_2)\ge j$. By definition of $k$-ranking there is vertex $w$
in a subpath of $P_{2^n-1}$ that connects those two vertices, with $f(w)>f(v_1)$. Note that $w \in A_j$. Since $w$ does
not belong to any of the components in $\mathcal{C} (A_j)$, any other path connecting those two vertices must contain
$w$. Therefore, $w\in P$. This proves Part 3.
\end{proof}

\begin{theorem} \label{maintheoremcomponents} If $n>3$, then
\begin{enumerate}
\item $\chi_{r}(P_{2^{n}-1} \cup \bigcup_{j=4}^{n} E_j)= \chi_{r}(P_{2^{n}-1} )=n$ if and only if $\bigcup_{j=4}^{n} E_j$ is the set of good edges for $P_{2^{n}-1}$.

\item $\bigcup_{j=4}^{n} E_j = E(H_{P}).$

\item $\mu (P_{2^n-1})= (n-3)2^n+4$.

\end{enumerate}
\end{theorem}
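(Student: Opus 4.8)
The plan is to prove Part 2 first by double inclusion, read off Part 1 as essentially a restatement of Theorem \ref{maintheorem}, and finish with the enumeration in Part 3. The organizing idea throughout is the structural description of a good edge that Proposition \ref{agoodedge} and Lemma \ref{krankingfunction} together yield: an edge $\{v_a,v_b\}$ with $a<b$ is good for $P_{2^n-1}$ precisely when $f(v_a)\neq f(v_b)$ and $\max\{f(v_a),f(v_b)\}$ strictly exceeds every label $f(v_i)$ for $a<i<b$. Once this characterization is in hand, the level index $j$ and the relevant component are forced, which is what drives every step below.

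For Part 2 I would show $\bigcup_j E_j \subseteq E(H_P)$ and $E(H_P)\subseteq \bigcup_j E_j$ separately. The inclusion $\bigcup_j E_j\subseteq E(H_P)$ is immediate: each edge of $E_j$ is good for $P_{2^n-1}$ by Lemma \ref{components}(2)--(3), and every good edge lies in $E(H_P)$ by Proposition \ref{agoodedge} together with Theorem \ref{maintheorem}. For the reverse inclusion, take a good edge $e=\{v_a,v_b\}$ and set $j=\max\{f(v_a),f(v_b)\}+1$. The characterization forces every vertex strictly between $v_a$ and $v_b$ to have label smaller than $j$, so $v_a$ and $v_b$ lie in a single component of $P_{2^n-1}\setminus A_j$; the larger-label endpoint has label $j-1$ and, by Lemma \ref{components}(1), is the unique maximum-label (center) vertex $v$ of that component. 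Since $e$ is not an edge of the path, $e\in E(v)\subseteq E_j$. Combining the two inclusions with Theorem \ref{maintheorem} gives $\bigcup_j E_j=E(H_P)$.

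For Part 1, I would note that the statement is now a reformulation of Theorem \ref{maintheorem}: by Part 2, $\bigcup_j E_j=E(H_P)$, and Theorem \ref{maintheorem} identifies $E(H_P)$ with the set of good edges. The two directions of the biconditional then follow formally. If $\chi_r(P_{2^n-1}\cup\bigcup_j E_j)=n$, every edge of the union is good, so the union is contained in, and by Part 2 equals, $E(H_P)$, the full set of good edges; conversely, if $\bigcup_j E_j$ is the set of good edges, adjoining them cannot raise the rank. The one point needing care is that edges of different levels can be added simultaneously: this holds because each added edge has its larger-label endpoint as a higher-color witness on every path that uses it, exactly as in the proof of Lemma \ref{components}(3), so no new violating monochromatic path appears.

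For Part 3 I would first observe that the $E_j$ are pairwise disjoint, since a good edge $\{v_a,v_b\}$ determines $j=\max\{f(v_a),f(v_b)\}+1$ uniquely (in $E(v)$ the distinguished vertex $v$ is the larger-label endpoint). Hence $\mu(P_{2^n-1})=\bigl|\bigcup_j E_j\bigr|=\sum_j|E_j|$. By Lemma \ref{components}(1), level $j$ contributes $2^{n-j+1}$ components of the form $P_{2^{j-1}-1}$, each with its center adjacent to exactly two of the remaining $2^{j-1}-2$ vertices, giving $2^{j-1}-4$ edges per component; the top level, where the single component is all of $P_{2^n-1}$ with center of label $n$, contributes $2^n-4$ edges. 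Thus $|E_j|=2^{n-j+1}(2^{j-1}-4)=2^n-2^{n-j+3}$, and summing over the $n-2$ levels telescopes to $(n-3)2^n+4$. The step I expect to be the main obstacle is the reverse inclusion in Part 2, namely matching the binary-indexed output of Procedure 1 to the component-maximum edges $E(v)$; routing everything through the label characterization above is what makes this a forced verification rather than a computation with the digits $\alpha_i$, and it is also what must be handled cleanly at the top level so that the geometric sum lands exactly on $(n-3)2^n+4$.
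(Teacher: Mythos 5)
Your whole argument pivots on the characterization announced in your first paragraph: an edge $\{v_a,v_b\}$ is good if and only if $f(v_a)\neq f(v_b)$ and $\max\{f(v_a),f(v_b)\}$ exceeds every label strictly between. The only direction you actually use (in the reverse inclusion of Part 2) is ``good $\Rightarrow$ label condition,'' and that direction is precisely the crux of the whole theorem; you assert it but never prove it. It does not come for free from Proposition \ref{agoodedge} and Lemma \ref{krankingfunction}: Proposition \ref{agoodedge} only places a good edge in $H_{P}$, i.e., tells you it satisfies one of the three binary-digit conditions of Procedure 1, and converting each of those conditions into your label condition is exactly the computation with the digits $\alpha_i$ (via Lemma \ref{krankingfunction} and the positions of each label in the standard ranking) that you explicitly say you are bypassing. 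So you must either perform that translation or prove the implication directly. The direct proof is short and is what the paper does in the second half of its Part 1 argument: if $u$ is the endpoint of larger label $j$ and some vertex of label $\geq j$ lies strictly between $u$ and the other endpoint $v$, then $u$ and $v$ lie in different components of $P_{2^n-1}\setminus A_{j+1}$; taking $w$ to be the label-$j$ center of $v$'s component, the path $u,v,\dots,w$ has both ends labeled $j$ and all interior labels smaller than $j$, so the standard ranking fails on the augmented graph, and uniqueness of the optimal ranking (Lemma \ref{rankingofapath}) then makes the edge forbidden. Without some such argument, Part 2 (and hence Part 1) rests on an unproven claim.

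Apart from this, your architecture is sound and genuinely different from the paper's, which is worth recording. For rank preservation the paper runs an induction on the level $t$, adjoining $E_4,\dots,E_t$ one level at a time, whereas you propose a single witness argument for simultaneous addition (any path joining two equal labels either meets the dominating vertex between them or jumps over it via an added edge whose larger endpoint dominates everything it skips); that is the right mechanism, but it too is only sketched and needs to be written out. For Part 3 the paper derives the recurrence $a_n=2a_{n-1}+2^n-4$ by deleting the center vertex, while you sum $|E_j|=2^{n-j+1}(2^{j-1}-4)$ over pairwise disjoint levels and telescope; your disjointness observation and the sum are correct, and you rightly include the top level (single component, center labeled $n$), which is $E_{n+1}$ in the paper's notation. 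Note that this silently corrects the theorem's stated range $\bigcup_{j=4}^{n}E_j$, which by itself would give only $(n-4)2^n+8$ edges; the paper's own Part 3 proof likewise uses $E(v_{2^{n-1}})$, i.e., the top level, so your reading matches the intended statement. In short: the enumeration and the overall plan are fine, and the single genuine gap is the unproven ``good edge implies label condition'' step.
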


\begin{proof} For this proof we denote by $f$ the standard $k$-ranking of $P_{2^n-1}$.
We prove of Part 1.  The  proof that the condition is sufficient is straightforward.

To prove that the condition is necessary we use induction. Let $S(t)$ be the statement
\[\chi \left(P_{2^n-1}\cup \bigcup_{j=4}^{t} E_j\right)=\chi (P_{2^n-1}).  \]
Lemma \ref{components} Part 3. proves $S(4)$. Suppose the $S(k)$ is true for some $ 4\le k <n$.
Let
\[ G_0= P_{2^n-1}\cup \bigcup_{j=4}^{k} E_j \text{ \; and \; } G_1= P_{2^n-1}\cup \bigcup_{j=4}^{k+1} E_j.\]
Let $v_1$ and $v_2$ be vertices in $G_1$ with $f(v_1)=f(v_2)$, and suppose that both vertices are
connected by a path $P$. Suppose that $v_1$ and $v_2$ are in the same component
$\mathcal{C} \in \mathcal{C}(A_{k+1})$, then $f(v_1)=f(v_2)<j$. If $P$ is a path of $\mathcal{C}$,
by the heredity property from $G_0$, there is a vertex in $P$ with a label larger than $f(v_1)$.

We suppose that $P$ is not a path of $\mathcal{C}$. Let $v$ the vertex in  $\mathcal{C}$
with $f(v)=k$. These two last facts and definition of $E(v)$ imply that $P$ contains an edge
in $E(v)$. Therefore, there is a vertex in $P$ with a label larger than $f(v_1)$.

We suppose $v_1$ and $v_2$ are in different components  of $G_1\setminus A_{k+1}$. So,
$f(v_1)=f(v_2)<k+1$. By definition of $G_1$ and $E_{k+1}$ we see that any path in $G_1$
connecting two vertices in different component of $G_1\setminus A_{k+1}$ has at
least one vertex in $A_{k+1}$. Since vertices in $A_{k+1}$ have labels larger than $k$,
there is a vertex in $P$ with a label larger than $f(v_1)$.

We now suppose that $v_1$ and $v_2$ are in $A_{k+1}$. So,  $f(v_1)=f(v_2)\ge k+1$. By definition
of ranking there is a vertex $w$ in a path of $G_1$, that connect those two vertices, with
$f(w)>f(v_1)$. Note that $w \in A_{k+1}$. Since $w$ does not belong to any of the
components of $G_1\setminus A_j$, any other path connecting those two vertices must
contain $w$. Therefore, $w\in P$.  This proves that $S(k+1)$ is true. Thus,
$\bigcup_{j=3}^{n} E_j$ is a set of good edges for $P_{2^{n}-1}$.

We now prove that $\bigcup_{i=3}^{n} E_i$ is the largest possible set of good edges
for $P_{2^{n}-1}$. Suppose that $uv$ is a good edge for $P_{2^{n}-1}$ with $f(v)<f(u)=j$.
If the vertices $u$ and $v$ are in the same component of $P_{2^{n}-1} \setminus A_{j+1}$,
then is easy to see that $uv \in E_{j+1}$. Note that $j$ is the largest label in each
component of $P_{2^{n}-1} \setminus A_{j+1}$. If $u$ and $v$ are in different component
of $P_{2^{n}-1} \setminus A_{j+1}$, then $uv$ give rise to a path $P$ connecting $u$ and
a vertex  $w$ where $w$ and $v$ are in the same component and $f(w)=j$. That is a
contradiction, because $f(u)=f(w)=j$ and $P$ does not have label larger than $j$.
This proves that $\bigcup_{j=3}^{n} E_j$ is the set of good edges of $P_{2^{n}-1} $.

The prove of Part 2. is straightforward from Theorem \ref{maintheorem} and Part 1.

Proof of  Part 3. It easy to see that the vertex $v_{2^{n-1}}$ of $P_{2^n-1}$ has label $n$.
That is, $v_{2^{n-1}}$ is the vertex with largest color in $P_{2^n-1}$. Therefore,
$P_{2^n-1}\setminus A_{n}$ has  exactly two components. Note that each component is equal to
$P_{2^{n-1}-1}$ and that  the cardinality of $E(v_{2^{n-1}})$ is $(2^n-1)-3$. Since $v_{2^{n-1}}$ is
the vertex with the largest color in $P_{2^n-1}$, it is easy to see
(from proof of Part 1 and the proofs of Lemma \ref{components}) that the maximum number of edges
that can be added to $P_{2^n-1}$ without changing  the rank is equal to the maximum number
of edges that can be added to each component, $P_{2^{n-1}-1}$, plus all edges in $E(v_{2^{n-1}})$.

Let $a_n=\mu (P_{2^{n}-1})$. Then from the previous analysis we have that
\[a_n= 2 \mu(P_{2^{n-1}-1})+ \mid E(v_{2^{n-1}}) \mid.\]
This give rise to the recurrence relation
$a_n= 2 a_{n-1}+ 2^{n}-4$. Therefore, solving the recurrence relation we have that
$a_n= (n-3)2^n+4$. This proves Part 3.
\end{proof}

\section{Enumeration of the Set of Good Edges for $C_{2^{k}}$}

In this section we  use the results in the previous section to find the maximum set of edges
that may be added to $C_{2^{k}}$  without changing the rank number (good edges).

Suppose that $\Gamma$ represents any of the following graphs;
$C_{2^{k}}$, $K_{m_{1},m_{2},\dots,m_{t}}$ or the graph defined by the
union of two copies of $K_{n}$ joined by an edge $e$. In this section we give
sufficient and necessary conditions to determine whether a set of edges $H$ is
a good set for $\Gamma$. For all graphs in this section we count the number of
elements in each maximum set of good edges.

We recall that in Figure 2 Part (a) we show the graph $G=C_{2^{4}}\cup H$ where $H$ is the
set of all good edges for $G$. So, $\chi_{r}(G)=\chi_{r}(C_{2^{4}})=5$. In
Figure 2 Part (b) we show the graph $G^{\prime}=(C_{2^{k}}\setminus
\{v_{16}\})\cup H^{\prime}$ where $H^{\prime}$ is the set of all good edges
for $G^{\prime}$. Since the graph in Figure 2 Part (b) is equivalent to the
graph in Figure 1, Theorem \ref{maintheoremcomponents} can be applied to this graph.
Theorem \ref{goodarcscyclecorollay} gives sufficient and necessary conditions
to determine whether the graphs $G=C_{2^{k}}\cup H$ and $C_{2^{k}}$ have the
same rank number and counts the maximum number of good edges.

\begin{proposition}
\label{goodarcscycle} If an edge $e$ is good for $P_{2^{k}-1}$ then $e$ is
good for $C_{2^{k}}$.
\end{proposition}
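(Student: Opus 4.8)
The plan is to compare the unique optimal rankings of the cycle and the path, exploiting the fact that deleting one vertex turns $C_{2^{k}}$ into $P_{2^{k}-1}$. Write $V(C_{2^{k}})=\{v_{1},\dots,v_{2^{k}}\}$ with the cyclic adjacency $v_{1}v_{2}\cdots v_{2^{k}}v_{1}$, and let $g$ be the unique $(k+1)$-ranking guaranteed by Lemma \ref{rankingofapath}. In this ranking $v_{2^{k}}$ is the unique vertex carrying the top label $k+1$, and deleting it leaves the path on $v_{1},\dots,v_{2^{k}-1}$ whose induced labeling is a $k$-ranking of $P_{2^{k}-1}$ (restriction to an induced subgraph preserves the ranking property and drops the label $k+1$), hence equals the standard $k$-ranking $f$ by uniqueness.

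First I would dispose of the lower bound: since $C_{2^{k}}$ is a subgraph of $C_{2^{k}}\cup\{e\}$, heredity gives $\chi_{r}(C_{2^{k}}\cup\{e\})\ge\chi_{r}(C_{2^{k}})=k+1$. The substance is the matching upper bound, for which I would show that the very same labeling $g$ remains a valid $(k+1)$-ranking after the edge $e=\{v_{m},v_{n}\}$ is added. A preliminary observation makes this possible: because $e$ is good for $P_{2^{k}-1}$ we have $\chi_{r}(P_{2^{k}-1}\cup\{e\})=k$, and any $k$-ranking of $P_{2^{k}-1}\cup\{e\}$ restricts to a $k$-ranking of $P_{2^{k}-1}$; by the uniqueness in Lemma \ref{rankingofapath} that restriction must be $f$, so $f$ itself is already a $k$-ranking of $P_{2^{k}-1}\cup\{e\}$. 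Note also that the endpoints $v_{m},v_{n}$ lie in $\{v_{1},\dots,v_{2^{k}-1}\}$, so $e$ is an edge of the path portion of the cycle.

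To verify the ranking property of $g$ on $C_{2^{k}}\cup\{e\}$, take any two vertices $u,w$ with $g(u)=g(w)$ and any $u$-$w$ path $P$ in $C_{2^{k}}\cup\{e\}$. Since $v_{2^{k}}$ is the unique vertex of maximum label, neither $u$ nor $w$ equals $v_{2^{k}}$, so $g(u)=g(w)\le k$. I would split into two cases. If $P$ passes through $v_{2^{k}}$, then $P$ contains a vertex of label $k+1>g(u)$ and we are done. If $P$ avoids $v_{2^{k}}$, then $P$ lies entirely in $(C_{2^{k}}\setminus\{v_{2^{k}}\})\cup\{e\}=P_{2^{k}-1}\cup\{e\}$, and since $f=g|_{\{v_{1},\dots,v_{2^{k}-1}\}}$ is a valid $k$-ranking of this graph, $P$ contains a vertex with label larger than $g(u)$. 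Thus $g$ is a $(k+1)$-ranking of $C_{2^{k}}\cup\{e\}$, giving $\chi_{r}(C_{2^{k}}\cup\{e\})\le k+1$ and hence equality, so $e$ is good for $C_{2^{k}}$.

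The main obstacle is the preliminary step that $f$ survives as a ranking of $P_{2^{k}-1}\cup\{e\}$: it is exactly here that the uniqueness of the optimal ranking of $P_{2^{k}-1}$ (Lemma \ref{rankingofapath}) is indispensable, since otherwise a good edge might force a different $k$-ranking that does not align with the restriction of $g$. Once that alignment is secured, the two-case path argument is routine.
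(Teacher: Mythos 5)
Your proof is correct and takes essentially the same route as the paper's (one-sentence) argument: extend the standard ranking of $P_{2^{k}-1}$ to $C_{2^{k}}$ by giving the extra vertex the top label $k+1$, and note that an edge good for the path cannot violate this ranking of the cycle. Your write-up additionally makes rigorous the step the paper leaves implicit---that the standard ranking $f$ itself remains a valid ranking of $P_{2^{k}-1}\cup\{e\}$, which you justify via the uniqueness of the path's optimal ranking---and this is exactly the right way to close that gap.
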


\begin{proof}
Since the standard ranking of $P_{2^{k}-1}$ is contained in the ranking of the
cycle $C_{2^{k}}$, and the additional vertex is given the highest label, it
follows that if edges are good for the path, they will be good for the cycle.
\end{proof}

Let $V:=\{v_{1},v_{2},\dots,v_{2^{k}}\}$ be the set of vertices of $C_{2^{k}}
$. Notice that set of edges of $P_{2^{k}-1}$ is $V\setminus\{v_{2^{k}} \}$. We
define
\[
H_{C} = H_{P} \cup\{e \mid e \not \in C_{2^{k}} \text{ and vertices }
\{v_{2^{k}}, v_{i}\}, \text{ with } i \in\{2, \dots, 2^{k}-2 \} \}.
\]

\begin{theorem} \label{goodarcscyclecorollay}  If $k>3$, then

\begin{enumerate}
\item $\chi_{r}(C_{2^{k}} \cup H_{C})= \chi_{r}(C_{2^{k}})=k+1$ if and only if $H_{C}$ is the set of good edges for $C_{2^{k}}$.

\item  $\mu (C_{2^n})= (n-2)2^n+1$.

\end{enumerate}
\end{theorem}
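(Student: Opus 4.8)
The plan is to reduce Part~1 to the path results already established (Theorems~\ref{maintheorem} and~\ref{maintheoremcomponents}) by isolating the single extra vertex $v_{2^k}$, and then to obtain Part~2 by a one-line count. Throughout I would write $f$ for the standard ranking of $C_{2^k}$, the unique $(k+1)$-ranking guaranteed by Lemma~\ref{rankingofapath}; it extends the standard ranking of the path $P_{2^k-1}$ on $v_1,\dots,v_{2^k-1}$ and assigns the unique maximum label $k+1$ to $v_{2^k}$. The two things to prove for Part~1 are that adding all of $H_C$ preserves the rank, i.e.\ $\chi_r(C_{2^k}\cup H_C)=k+1$, and that $H_C$ is exactly the set of good edges; since both statements are true, the stated biconditional follows.

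For the rank computation I would check that $f$ is a valid $(k+1)$-ranking of $C_{2^k}\cup H_C$. Let $u,w$ be a monochromatic pair and $P$ a $u$--$w$ path in $C_{2^k}\cup H_C$. If $P$ passes through $v_{2^k}$, then because $v_{2^k}$ is the unique vertex of label $k+1$ we have $f(u)=f(w)\le k$ and $v_{2^k}$ occurs as an interior vertex of $P$ carrying a strictly larger label, so the ranking condition holds. Otherwise $P$ avoids $v_{2^k}$ and hence lies entirely in the path part $P_{2^k-1}\cup H_P$, where Theorem~\ref{maintheoremcomponents} (equivalently Theorem~\ref{maintheorem}) guarantees that the standard path ranking, which is the restriction of $f$, is valid, producing an interior vertex of $P$ with a larger label. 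The length-one case (proper coloring) is immediate, since a monochromatic edge inside the path part is excluded by validity of the path ranking and one at $v_{2^k}$ is excluded by its unique label. Thus $\chi_r(C_{2^k}\cup H_C)\le k+1$; the reverse inequality is heredity, so equality holds.

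For maximality I would show every edge $e=\{v_m,v_n\}$ that is good for $C_{2^k}$ lies in $H_C$. If $e$ is incident to $v_{2^k}$ there is nothing to prove: the only such non-cycle edges go to $v_2,\dots,v_{2^k-2}$, all of which are in $H_C$ by definition. So assume both endpoints lie in the path part. The key step — the part I expect to be most delicate — is to lift ``forbidden for the path'' to ``forbidden for the cycle.'' If $e\notin H_P$, then by Proposition~\ref{agoodedge} (contrapositive) $e$ is forbidden for $P_{2^k-1}$; by the uniqueness in Lemma~\ref{rankingofapath} the standard path ranking is the only $k$-ranking, so it must already fail on $P_{2^k-1}\cup\{e\}$, giving a monochromatic $u$--$w$ path $Q$ through $e$ with no larger interior label. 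Now $Q$ still lives in $C_{2^k}\cup\{e\}$, and again by uniqueness of the cycle ranking any $(k+1)$-ranking of $C_{2^k}\cup\{e\}$ restricts to $f$; but $f$ agrees with the standard path ranking on $Q$, so it fails there as well. Hence $C_{2^k}\cup\{e\}$ admits no $(k+1)$-ranking, contradicting that $e$ is good. Therefore $e\in H_P\subseteq H_C$. Combined with the easy fact that every edge of $H_C$ is good — via Proposition~\ref{goodarcscycle} for the edges of $H_P$ and the uniqueness of the maximum label for the edges at $v_{2^k}$ — this shows $H_C$ is precisely the set of good edges.

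Finally, Part~2 is a direct count once Part~1 identifies $H_C$. The edges of $H_C$ split into the good edges of the path, numbering $\mu(P_{2^n-1})=(n-3)2^n+4$ by Theorem~\ref{maintheoremcomponents}, together with the edges joining $v_{2^n}$ to $v_2,\dots,v_{2^n-2}$, numbering $2^n-3$. Adding these gives
\[
\mu(C_{2^n}) = \bigl((n-3)2^n+4\bigr) + (2^n-3) = (n-2)2^n+1,
\]
as claimed.
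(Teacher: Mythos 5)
Your proposal is correct in substance and follows the same overall strategy as the paper's proof --- reduce everything to the path results and exploit the unique maximal label at $v_{2^k}$, with an identical count for Part 2 --- but your execution differs in two ways worth noting. First, where the paper deletes the two cycle edges $\{v_{2^k-1},v_{2^k}\}$ and $\{v_{2^k},v_1\}$, applies Theorem \ref{maintheoremcomponents} to the resulting copy of $P_{2^k-1}\cup H_P$, and then argues the edges at $v_{2^k}$ back in, you verify directly that the standard cycle ranking stays valid on $C_{2^k}\cup H_C$ by splitting every path according to whether it passes through $v_{2^k}$; these are essentially the same idea, yours being a bit cleaner to state. Second, and more substantively, you prove maximality explicitly: every edge good for $C_{2^k}$ with both ends in $\{v_1,\dots,v_{2^k-1}\}$ must lie in $H_P$, via lifting ``forbidden for the path'' to ``forbidden for the cycle'' through uniqueness of the optimal rankings. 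This is the converse of Proposition \ref{goodarcscycle}, and the paper never actually establishes it: its ``sufficiency'' direction only shows that, assuming rank equality, each edge of $H_C$ is good, and its Part 2 count tacitly assumes exactly the converse you prove, since $\mu(C_{2^n})=|H_C|$ requires that no good edge lies outside $H_C$. So your writeup is more complete on the point that justifies the enumeration. One caveat: your claim that any $(k+1)$-ranking of $C_{2^k}\cup\{e\}$ restricts to $f$ overstates Lemma \ref{rankingofapath}; as the paper itself notes in its proof, the cycle ranking is unique only up to swapping the two largest labels, so the restriction could instead place $k+1$ at $v_{2^{k-1}}$ and $k$ at $v_{2^k}$. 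The fix is immediate: the violating path $Q$ you extract cannot touch $v_{2^{k-1}}$ at all, because its endpoints share a label smaller than $k$ (label $k$ occurs exactly once in the path part, so it cannot be the common endpoint label) and an interior vertex of label $k$ would destroy the violation; hence $Q$ avoids both vertices where the two candidate rankings differ, and it defeats either one.
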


\begin{proof} To prove Part 1, we first show the condition is sufficient. Suppose
$\chi_{r}(C_{2^{k}} \cup H_{C} )=\chi _{r}(C_{2^{k}})=k+1$.
Suppose $E(H)$ is not a set of good edges. Thus, $E(H)$
contains a forbidden edge, therefore the rank number of $C_{2^{k}}$ is greater
than $k+1$. That is a contradiction.

Next we show the condition is necessary. It is known that $\chi_{r}(C_{2^{k}})=k+1$
and that this ranking is unique (up to permutation of the two largest
labels) \cite{bruoth}. Let $f$ be a ranking of $C_{2^{k}}$ with $k+1$ labels
where $f(v_{2^{k}})=k+1$.

Let $e_{1}=\{v_{2^{k}-1},v_{2^{k}}\}$ and $e_{2}=\{v_{2^{k}},v_{1}\}$ be two
edges of $C_{2^{k}}$ and let $H^{\prime}$ be the graph formed by edges of $H$
with vertices in $V^{\prime}=V(H)\setminus\{v_{t}\}=\{v_{1},v_{2} ,\ldots,v_{2^{k}-1}\}$.
Theorem \ref{maintheoremcomponents} Parts 1 and 2 imply
 that $E(H^{\prime})$ is a set of good edges for the graph
 $C_{2^{k}}\setminus\{e_{1},e_{2}\}$ if and
only if $\chi_{r}(C_{2^{k}}\setminus\{e_{1},e_{2}\}\cup H^{\prime})=k$ (see
Figure 2 Part (b)). Note that $V^{\prime}$ is the set of vertices of
$C_{2^{k}}\setminus\{e_{1},e_{2}\}$. The vertices of $C_{2^{k}}\setminus
\{e_{1},e_{2}\}\cup H^{\prime}$ have same labels as vertices $V^{\prime}$.
Combining this property with $f(v_{2^{k}})=k+1$ gives $\chi_{r}(C_{2^{k}}\cup
H^{\prime})=k+1$.

We now prove that $\chi_{r}(C_{2^{k}}\cup H)=k+1$. Let $e$ be an edge in
$H\setminus H^{\prime}$. Therefore, the end vertices of $e$ are $v_{2^{k}}$
and $v_{n}$ for some $2\leq n\leq2^{k}-2$. From the ranking $f$ of a cycle we
know that $f(v_{2^{k}})=k+1$ and $f(v_{n})<k+1$. Hence we do not create a new
path in $C_{2^{k}} \cup H_{C}$ connecting vertices with labels $k+1$.

Proof of Part 2.
Let $W$ be the set of edges of the form $\{v_{2^{n}}, v_i\}$ for $i= 2,3, \ldots, 2^{n}-2$.
The cardinality of $W$ is $2^n-3$. From Proposition \ref{goodarcscycle} we know that all
good edges for $P_{2^n-1}$ are also good for $C_{2^n}$. Therefore, the maximum number
of edges that can be added to $C_{2^n}$ without changing the rank is equal to maximum
number of edges that can be added to $P_{2^n-1}$ plus all edges in $W$. Thus,
$\mu (C_{2^n})= \mu (P_{{2^n}-1})+\mid W \mid$. This and Theorem \ref{maintheoremcomponents} Part 3.
imply that  $\mu (C_{2^n})= (n-3)2^n+4 +2^n-3$. Therefore,  $\mu (C_{2^n})= (n-2)2^n+1$.
\end{proof}

\begin{theorem} \label{productmultipartitegraphs} Let $m_1, m_{2}, \ldots, m_t$ be positive integers with $m_{1}=\max\{m_{i}\}_{i=1}^{t}$. If $G=K_{m_{1},m_{2},\dots,m_{t}}$  is a
multipartite graph, then

\begin{enumerate}
\item any edge connecting two vertices in a part of order $m_{1}$ is
forbidden, and

\item any edge connecting any two vertices in any part of order $m_{i}$ where $i\neq1$
is good.

\item  $\mu (K_{m_{1},m_{2},\dots,m_{t}})= \sum_{i=2}^t \dfrac{(m_{i}-1)m_{i}}{2}.$

\end{enumerate}
\end{theorem}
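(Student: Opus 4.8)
The plan is to build everything on two structural observations about complete multipartite graphs, then dispatch the three parts in order. Write $M=\sum_{i=1}^{t}m_{i}$ and $N=\sum_{i=2}^{t}m_{i}=M-m_{1}$, so that the known rank number of complete multipartite graphs (cited in the introduction) reads $\chi_{r}(K_{m_{1},\dots,m_{t}})=N+1$, realized by the canonical optimal ranking in which a part of maximum order $V_{1}$ receives the single color $1$ and the remaining $N$ vertices receive pairwise distinct colors $2,\dots,N+1$. The two engines are: (i) since $G$ is complete multipartite, any two vertices in different parts are adjacent, so in \emph{any} ranking the set of colors used on one part is disjoint from the set used on another part; and (ii) \emph{at most one part can carry a repeated color}. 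Fact (ii) follows from the usual cross-comparison: if non-adjacent $a,b\in V_{i}$ share color $c$ then every vertex outside $V_{i}$ has color exceeding $c$, and if non-adjacent $a',b'\in V_{j}$ share color $d$ then every vertex outside $V_{j}$ has color exceeding $d$; taking $a',b'$ (outside $V_{i}$) and $a,b$ (outside $V_{j}$) forces $c>d$ and $d>c$, a contradiction. Both facts remain valid after adjoining one intra-part edge, which is all we ever do.

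For Part 2, I would simply exhibit a ranking of $G\cup\{e\}$ on $N+1$ colors. Take the canonical ranking above and adjoin an edge $e=\{u,v\}$ with $u,v\in V_{i}$, $i\neq1$. Because every vertex outside $V_{1}$ already carries a distinct color, $u$ and $v$ are already colored differently, so the canonical coloring is unchanged and still a valid ranking of $G\cup\{e\}$: the only same-colored pairs live in $V_{1}$ and every path between two of them leaves $V_{1}$ through a larger color. Hence $\chi_{r}(G\cup\{e\})\le N+1=\chi_{r}(G)$, and the reverse inequality holds since $G$ is a subgraph, so $e$ is good.

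For Part 1, the work is a lower bound: I claim every ranking $f$ of $G'=G\cup\{e\}$, with $e=\{u,v\}\subseteq V_{1}$, uses at least $N+2$ colors. If no part repeats a color, then all $M$ vertices are distinctly colored and $M=N+m_{1}\ge N+2$ (as $m_{1}\ge2$). Otherwise let $V_{r}$ be the unique part carrying a repeated color. By fact (i) the colors on $V_{r}$ are disjoint from those elsewhere, and by fact (ii) the $M-m_{r}$ vertices outside $V_{r}$ are pairwise distinctly colored; thus $f$ uses at least $(M-m_{r})+c_{r}$ colors, where $c_{r}\ge1$ is the number of colors on $V_{r}$. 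If $V_{r}=V_{1}$ then the added edge $uv$ forces $c_{r}\ge2$, giving at least $(M-m_{1})+2=N+2$. If $V_{r}\neq V_{1}$ then, because $m_{1}$ is the maximum order, $m_{r}<m_{1}$, so $M-m_{r}>M-m_{1}=N$ and the count is again at least $N+2$. Either way $\chi_{r}(G')\ge N+2>\chi_{r}(G)$, so $e$ is forbidden. I expect this case split to be the main obstacle: the delicate step is the sub-case $V_{r}\neq V_{1}$, where one must rule out ``relocating'' the repeated color to another maximum-order part; this is exactly the point where the maximality (and, for the clean statement, the uniqueness) of $m_{1}$ is essential.

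For Part 3, the upper bound $\mu(G)\le\sum_{i=2}^{t}\binom{m_{i}}{2}$ is immediate from Part 1 together with monotonicity: any good set $H$ must avoid every forbidden edge, since $e\in H$ forbidden would give $\chi_{r}(G\cup H)\ge\chi_{r}(G\cup\{e\})>\chi_{r}(G)$; hence $H$ uses no intra-$V_{1}$ position, leaving only the $\sum_{i=2}^{t}\binom{m_{i}}{2}$ intra-part positions in $V_{2},\dots,V_{t}$. For the matching lower bound I would exhibit the full set of all such edges as a single good set: adjoining every intra-part edge of $V_{2},\dots,V_{t}$ turns each of those parts into a clique, i.e. splits it into singletons, producing the complete multipartite graph $K_{m_{1},1,\dots,1}$ (with $N$ singleton parts) whose unique maximum part still has order $m_{1}$, so its rank number is $1+N=\chi_{r}(G)$ by the same known formula. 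Thus this set of size $\sum_{i=2}^{t}\binom{m_{i}}{2}=\sum_{i=2}^{t}\frac{(m_{i}-1)m_{i}}{2}$ is good, and equality follows.
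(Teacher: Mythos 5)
Your proposal is correct and, for Part 1, genuinely stronger than the paper's own proof. The paper argues all three parts from a single canonical ranking $f$ (color $1$ on the maximum part $W$, distinct colors $2,\dots,N+1$ on the remaining $N$ vertices). Part 2 is proved there exactly as you do, by observing that $f$ survives the added edge. But for Part 1 the paper only notes that an intra-$W$ edge creates a path joining two vertices of color $1$, i.e.\ that this \emph{particular} ranking is no longer valid; it never rules out some other $(N+1)$-ranking of $G\cup\{e\}$. Your facts (i) and (ii) (color sets of distinct parts are disjoint; at most one part can carry a repeated color) supply exactly that missing lower bound: in every ranking of $G\cup\{e\}$, either all $M\ge N+2$ vertices get distinct colors, or the unique repeated-color part is $V_{1}$ (forcing at least $(M-m_{1})+2=N+2$ colors because of the new edge), or it is some $V_{r}\ne V_{1}$ (forcing at least $(M-m_{r})+1\ge N+2$ colors). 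This is the argument the paper's Part 1 implicitly requires. For Part 3 the routes also differ slightly: the paper exhibits the intra-part edges of the non-maximum parts and counts them, with collective goodness inherited from the survival of $f$, while you close those parts into singletons and invoke the known formula $\chi_{r}(K_{m_{1},1,\dots,1})=N+1$, plus subgraph monotonicity for the upper bound. Both are valid; yours avoids re-verifying the ranking property after adding many edges at once.

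One caveat, which you yourself flag: your step ``$V_{r}\neq V_{1}$ implies $m_{r}<m_{1}$'' requires the maximum part to be unique, and with ties Part 1 of the theorem is in fact false as stated (in $K_{2,2}$, adding a single edge inside either part leaves the rank number at $3$, since the other part can carry the repeated color). This is a defect of the theorem's statement rather than of your argument, and the paper's proof is silent on it as well; under the uniqueness reading your proof is complete.
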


\begin{proof}
Let $W=\{w_{1},w_{2},\ldots,w_{m_{1}}\}$ be the set of vertices of the part of
$G$ with order $m_{1}$. Let $V=\{v_{2},v_{3},\ldots,v_{r}\}$ be the set of
vertices of $G\backslash W$. We consider the function
\[
f(x)=\left\{
\begin{array}
[c]{cc}
1 & \text{ if }x\in W\\
i & \text{ if }x=v_{i}\text{ for some }v_{i}\in V.
\end{array}
\right.
\]

To see that $f$  is  a minimum ranking of $G$, note that reducing any label
violates the ranking property.

Proof of Part 1. Any edge connecting two vertices in $W$ gives rise to a
path connecting to vertices with same label.

Proof of Part 2. Any edge connecting two vertices in $V$ does not create
any new path with vertices with the same label.

Proof of Part 3. Let $U=\{u_1, \ldots ,u_{m_s} \}$ the set of vertices of the part of
$K_{m_{1},m_{2},\dots,m_{t}}$ with $m_s$ vertices and with $s\not =1$.
Let $E_{m_s}$ be set of edges of the form
$\{v_{i}, v_{j}\}$ for $i, j$ in $\{1, 2, \ldots m_s-2\}$ and $j>i+1$.
From the proof of Theorem \ref{productmultipartitegraphs} we know that
$E_{m_s}$ is a set of good edges of
$K_{m_{1},m_{2},\dots,m_{t}}$ for $s=2, 3, \ldots, t$ (if $s=1$, then $E_{m_1}$ will be a forbidden set).
The cardinality of $E_{m_s}$ is $(m_s-1)m_s/2$ for $s=2, 3, \ldots, t$.
This implies that \[\mu (K_{m_{1},m_{2},\dots,m_{t}})= \sum_{s=2}^t \frac{(m_{s}-1)m_{s}}{2}.\] This proves Part 3.
\end{proof}

Let $G_{5}$ be the graph defined by the union of two copies of $K_{5}$ joined
by an edge $e$. In Figure 3 we show $G_{5}\cup H$ where $H$ is the set of all
good edges for $G_{5}$. So, $\chi_{r}(G_{5}\cup H)$ $=$ $\chi_{r}(G_{5})=6$ and $\mu(G_{5})=8$.
We generalize this example in Theorem \ref{unioncompletegraphs}.

\begin{figure} [htbp]
\begin{center}
\psfrag{a}[c]{$Ale es una gueva$} \psfrag{v15}[c]{$v_{15}$}
\psfrag{v16}[c]{$v_{16}$} \psfrag{1}[c]{$1$} \psfrag{2}[c]{$2$}
\psfrag{3}[c]{$3$} \psfrag{4}[c]{$4$} \psfrag{5}[c]{$5$}
\psfrag{e}[c]{$e$} \psfrag{6}[c]{$6$}
\includegraphics[width=75mm]{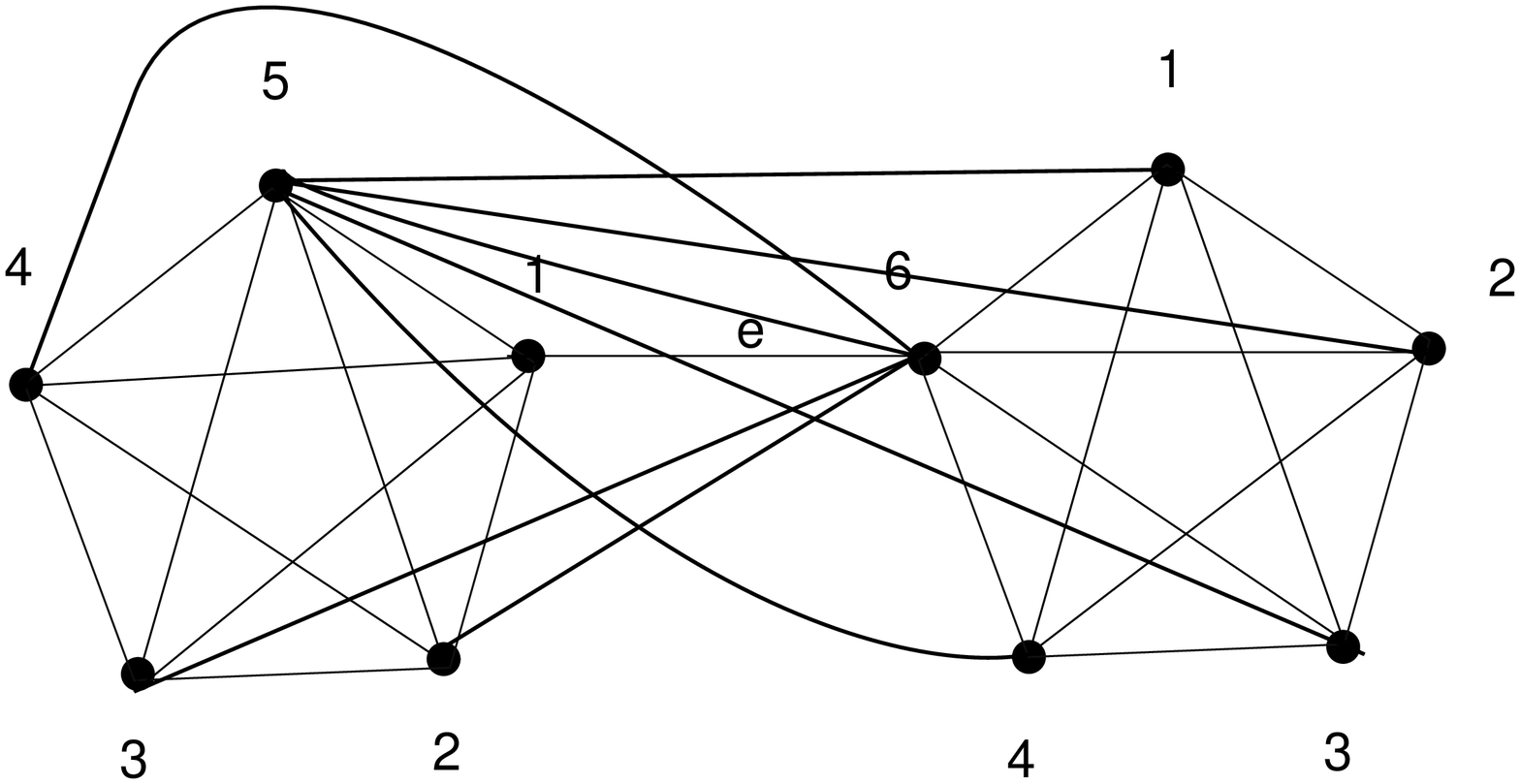}
\end{center}
\caption{$\chi_r(G_5 \cup H)=6$} \label{figure3}
\end{figure}

\begin{theorem}
\label{unioncompletegraphs} Let $G_{n}$ be the union of two copies of $K_{n}$
joined by an edge. Then,

\begin{enumerate}
\item any edge connecting a vertex with highest label in one
part with any other vertex in the other part is good. All other edges are
forbidden. Moreover, if $H$ is the set of all good edges for $G_{n}$, then
$\chi_{r}(G_{n}\cup H)=\chi_{r}(G_{n})=n+1$.

\item $\mu (G_{n})= 2(n-1)$.

\end{enumerate}
\end{theorem}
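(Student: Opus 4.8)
The plan is to first pin down an explicit optimal ranking of $G_n$, then read off the good edges from it, and finally prove that no larger family of edges can be added. Write $A$ and $B$ for the two copies of $K_n$ and let $u\in A$, $v\in B$ be the endpoints of the joining edge. I would begin by exhibiting the ranking $f$ that assigns the labels $1,2,\dots,n$ to $A$ with $f(u)=n$, and the labels $1,2,\dots,n-1,n+1$ to $B$ with $f(v)=n+1$. A direct check shows $f$ is a valid $(n+1)$-ranking: within each copy all labels are distinct, and since every $A$--$B$ path must traverse the single joining edge, it passes through $u$ and $v$, whose labels $n$ and $n+1$ exceed every repeated label (the labels $1,\dots,n-1$ are the only ones occurring in both copies). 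For the lower bound I would note that with only $n$ labels each complete copy is forced to use all of $\{1,\dots,n\}$, so both copies contain a vertex labelled $n$; these two vertices are joined by a path through the bridge with no larger label, violating the ranking property. Hence $\chi_r(G_n)=n+1$.

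Next I would identify the good edges with the help of $f$. Here $u$ is the vertex of largest label in $A$ and $v$ the vertex of largest label in $B$. Adding any edge $\{u,b\}$ with $b\in B$ or $\{v,a\}$ with $a\in A$ leaves $f$ a valid ranking, because any new $A$--$B$ path created by such an edge still runs through a vertex ($u$ or $v$) whose label dominates every repeated label; moreover all of these edges may be inserted simultaneously, so that with $H$ the resulting set one has $\chi_r(G_n\cup H)=n+1$. Conversely, for a cross edge $\{a,b\}$ with $a\neq u$ and $b\neq v$, the two vertices carrying the largest repeated label $n-1$ become joined, via the new edge, by a path whose interior labels are all at most $n-1$; this is a forbidden configuration, and I would verify that it cannot be repaired inside $G_n\cup H$ by any relabelling. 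Counting, the good edges are the $n-1$ edges from $u$ into $B\setminus\{v\}$ together with the $n-1$ edges from $v$ into $A\setminus\{u\}$, giving $|H|=2(n-1)$.

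The substantive part, and the step I expect to be the main obstacle, is the upper bound $\mu(G_n)\le 2(n-1)$: I must rule out \emph{every} larger family, not merely the ones that preserve my particular ranking $f$, since $G_n$ has many distinct optimal rankings. For this I would take an arbitrary set $H'$ of cross edges with $\chi_r(G_n\cup H')=n+1$ and fix an $(n+1)$-ranking $g$ of $G_n\cup H'$. Each copy uses $n$ of the labels $\{1,\dots,n+1\}$ and the label $n+1$ occurs exactly once, say at $z$. If the label $n$ occurs in both copies, then every cross edge is forced through the unique top vertex $z$ and $|H'|\le n-1$; so I may assume $n$ occurs in a single copy, and a count of labels forces its vertex $y$ to lie in the copy \emph{not} containing $z$. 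The crux is then to show that in $g$ every cross edge is incident to $y$ or to $z$: if a cross edge $\{a,b\}$ avoided both, the two vertices carrying the top repeated label would be joined through $\{a,b\}$ by a path with no interior label exceeding it, contradicting that $g$ is a ranking. Granting this, all cross edges (including the bridge) lie in $(\{y\}\times B)\cup(A\times\{z\})$, a set of at most $2n-1$ edges; removing the bridge leaves $|H'|\le 2(n-1)$.

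The delicate points to handle carefully in that final argument are the \emph{endpoint-coincidence} cases, where one of $a,b$ itself carries the top repeated label so the bad path shortens and the elementary ``interior is too small'' estimate must be re-derived; and the bookkeeping that $y$ and $z$ genuinely lie in different copies, which rules out the degenerate possibility and pins down the bound. Once $|H'|\le 2(n-1)$ is established alongside the explicit set $H$ of that size, Part 2 follows at once, and the dichotomy of Part 1 is precisely the statement that the good edges are exactly those joining the highest-labelled vertex of one part to the other part.
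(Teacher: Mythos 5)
Your treatment of Part 2 is correct and, in fact, more rigorous than the paper's own argument. The paper never proves the upper bound $\mu(G_{n})\leq 2(n-1)$ directly: it deduces it from the Part 1 claim that every other edge is forbidden, a claim it checks only against its single fixed ranking $f$. You instead bound an \emph{arbitrary} set $H'$ with $\chi_{r}(G_{n}\cup H')=n+1$ by analyzing an arbitrary $(n+1)$-ranking $g$: the label $n+1$ occurs exactly once, the label $n$ either occurs in both copies (forcing every cross edge through the top vertex, giving $|H'|\leq n-1$) or occurs once in the opposite copy, and then every cross edge must be incident to the vertex labelled $n$ or the vertex labelled $n+1$, so all cross edges including the bridge lie in a set of size $2n-1$. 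That argument is sound, and combined with your explicit simultaneous set $H$ of size $2(n-1)$ it settles Part 2. Note also that your good set $H$ (all edges out of the two bridge endpoints $u,v$) differs from the paper's (whose top vertex of the first copy is \emph{not} a bridge endpoint); both are legitimate maximum sets, which already signals that optimal rankings of $G_{n}$ are far from unique.

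The genuine gap is exactly the step you deferred in Part 1: the claim that a cross edge $\{a,b\}$ with $a\neq u$, $b\neq v$ ``cannot be repaired by any relabelling'' is not merely unverified, it is false, so no verification will close it. Relabel $G_{n}\cup\{a,b\}$ by giving $a$ the label $n$, the bridge endpoint $v$ the label $n+1$, and distinct labels $1,\dots,n-1$ to the remaining vertices of each copy. Any path joining two equally labelled vertices must cross between the copies, hence must use one of the two (vertex-disjoint) cross edges $\{u,v\}$ or $\{a,b\}$, and therefore contains $v$ or $a$ as an interior vertex of larger label; so $\chi_{r}(G_{n}\cup\{a,b\})=n+1$ and, under the paper's ranking-independent definition, \emph{every} single cross edge is good. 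Only the simultaneous addition of too many such edges is obstructed, which is what your Part 2 argument captures. Consequently the edge-by-edge dichotomy of Part 1 holds only relative to a fixed optimal ranking (equivalently, for edges added on top of the whole set $H$, where your covering argument does show $\{a,b\}$ raises the rank), not for single edges added to $G_{n}$. To be fair, the paper's own proof of the forbidden half has the same defect --- it exhibits the bad path $w_{j}w_{i}v_{j}$ only in its particular ranking $f$ and never rules out a relabelling --- so your write-up, which isolates the missing verification and proves Part 2 independently of it, is in better shape than the source; but as a proof of Part 1 as literally stated, the hole cannot be patched.
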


\begin{proof} To prove Part 1. we suppose that
$G_{n}=K\cup K^{\prime}\cup e$ where $K=K^{\prime}=K_{n}$. Let
$W=\{w_{1},w_{2},\ldots,w_{n}\}$ be the set of vertices of $K$ and let
$V=\{v_{1},v_{2},\ldots,v_{n}\}$ be the set of the vertices of $K^{\prime}$
and $\{w_{1},v_{n}\}$ the set of vertices of $e$. We consider the function
\[
f(x)=\left\{
\begin{array}
[c]{ll}
i & \text{ if }x=w_{i}\text{ for some }w_{i}\in W\\
i & \text{ if }x=v_{i}\text{ for some }v_{i}\in V\setminus\{v_{n}\}\\
n+1 & \text{ if }x=v_{n}.
\end{array}
\right.
\]

\noindent It is easy to see that $f$ is a minimum ranking of $G_{n}$ and that
$\chi_{r}(G_{n})=n+1$. Let
\[
H_{1}=\{e \mid e \not \in G_{n} \text{ is an edge with vertices } w_{n}, v_{i}
\text{ for some } i \in\{1, 2, \ldots, n-1 \} \}
\]
and
\[
H_{2}=\{e \mid e \not \in G_{n} \text{ is an edge with vertices } v_{n}, w_{i}
\text{ for some } i \in\{1, 2, \ldots, n-1 \} \}.
\]
We prove that $H=H_{1}\cup H_{2}$ is the set of good edges for $G_{n}$.

From the definition of $f$ we know the labels of the vertices in $K$ are
distinct and all of the labels in $K^{\prime}$ are distinct. The combination
of these properties and the definition of $f(v_{n})$ implies that if an edge
$e$ connects one vertex in $K$ with $v_{n}$ it does not create a new edge
connecting two edges with same label. Similarly, if an edge $e$ connects one
vertex in $K^{\prime}$ with $w_{n}$ it does not create a path connecting two
edges with the same label. This proves that $H$ is the set of good edges for
$G_{n}$ and that $\chi_{r}(G_{n}\cup H)=\chi_{r}(G_{n})=n+1$.

Suppose that an edge $e$ connects the vertices $w_{i}\in K$ and $v_{j}\in
K^{\prime}$ with $i\leq j\not =n$. The path $w_{j}w_{i}v_{j}$ has two vertices
with same label without a larger label in between. The proof of the case
$j\leq i\not =n$ is similar. Hence if $e\not \in H$, then $e$ is\ a forbidden edge.

Proof of Part 2. From proof of Part 1. we can see that $H_{1}$ and $H_{2}$
are the sets of good edges of $G_{n}$ and that the cardinality of each set is $n-1$.
So, $\mu (G_{n})= \mid H_1 \mid + \mid H_2 \mid= 2(n-1)$.
\end{proof}

\section* {Acknowledgment}
The authors are indebted to the referees for their comments and corrections that helped to improve
the presentation.

\end{document}